\newtheorem{theorem}{Theorem}[section]
\newtheorem{definition}[theorem]{Definition}
\newtheorem{corollary}[theorem]{Corollary}
\newtheorem{lemma}[theorem]{Lemma}
\newtheorem{example}[theorem]{Example}
\newtheorem{remark}[theorem]{Remark}
\numberwithin{equation}{section}
\def\@cite#1#2{\textup{[{#1\if@tempswa , #2\fi}]}} \makeatother
\setlist{labelindent=1pt,itemsep=.5em}
\setlist[itemize]{leftmargin=1.2cm}
\setlist[enumerate]{itemindent=0em,leftmargin=1.2cm}
\setlist[enumerate,1]{label={\upshape(\roman*)}}
\newcommand{\subjclass}[2][2010]{%
	\let\@oldtitle\@title%
	\gdef\@title{\@oldtitle\footnotetext{#1 \emph{Mathematics subject classification}: #2}}%
}
\newcommand{\keywords}[1]{%
	\let\@@oldtitle\@title%
	\gdef\@title{\@@oldtitle\footnotetext{\emph{Keywords}: #1.}}%
}
\title{Perov type T-contractive Mappings on Cone b-Metric Spaces with
	Generalized c-Distance}%
\author{Talat Nazir$^1$, Mujahid Abbas$^{2,3}$, Sergei Silvestrov$^4$ \\
	\footnotesize $^1$Department of Mathematical Sciences, University of South Africa, \\
	\footnotesize Florida 0003, South Africa \\
	\footnotesize \text{dr.talatnazir@gmail.com}   \\
	\footnotesize $^2$Department of Mathematics, Government College University, \\
	\footnotesize Katchery Road, Lahore 54000, Pakistan \\
	\footnotesize $^3$Department of Mathematics and Applied
	Mathematics, University of Pretoria, \\
	\footnotesize Hatfield 002, Pretoria, South Africa \\
	\footnotesize \text{abbas.mujahid@gmail.com}  \\
	\footnotesize $^{4}$Department of Mathematics and Physics, School of Education, Culture and
	Communication, \\
	\footnotesize M{\"a}lardalen University, Box 883, 72123 V{\"a}ster{\aa}s, Sweden \\
	\footnotesize \text{sergei.silvestrov@mdh.se}}
\subjclass[2020]{46A19, 47H10, 54H25, 54E50}
\keywords{fixed point, Perov type $T$-contraction, $c$-distance, cone $b$-metric space}
\date{\today}
\begin{document}
	
	\maketitle
	
	\begin{abstract}
		Fixed point results of Perov type mapping which satisfy generalized
		$T$-contractive conditions in the setup of cone $b$-metric spaces associated
		with generalized $c$-distance are proved and illustrated by nontrivial examples.
	\end{abstract}
	
	\section{Introduction}
	
	Fixed point theory provides certain simple tools to solve various
	problems arising in nonlinear analysis. The metric fixed point theory deals
	with the structure of sets equipped with some notion of distance functions
	and operators satisfying certain contraction conditions.
	
	The concept of a $b$-metric space was described by Bakhtin \cite%
	{BAK} and Boriceanu \cite{bor} as an extension of a usual metric space. Some
	useful fixed point results in the framework of a $b$-metric space were
	established in \cite{hussain1, BMV, CZE, KHH}.
	
	Huang and Zhang \cite{HZ07} extended the metric space structure
	and defined a cone metric space by replacing the range of the distance
	function with a normed space equipped with an order induced by the cone.
	Then, they obtained some fixed point results in a cone metric space (also,
	see \cite{ZA1}). Later on, the cone metric space was extended to a cone $b$%
	-metric space in \cite{CSDS,NHMS}.
	
	The notion of $w$-distance on metric spaces was introduced by Kada
	et al.\  \cite{Kada96} for solving non-convex minimization based problems.
	While, the notion of $c$-distance on cone metric spaces was defined by Cho
	et al. \cite{CSW} that could be viewed as the cone model of a $w$-distance.
	The concept of a $wt$-distance was initiated by Hussain et al. \cite{HSA} on
	a $b$-metric space and several fixed point results were proved using the $wt$%
	-distance on a $b$-metric space (also, see \cite{FAR17, MCK17}). Recently,
	the notion of a generalized $c$-distance in the setup of cone $b$-metric
	space was described by Bao et al.\  \cite{Bao} and some fixed point theorems
	were proved in this new framework.
	
	On the other hand, Perov \cite{Perov64} employed contraction
	matrices rather than the contraction constants to extend well known Banach
	contraction principle. Fixed point results for Perov type contractions in a
	cone metric space were presented in \cite{Cvetkovic15, Cvetkovic17}. Abbas
	et al. \cite{ANR17} obtained common fixed points of multivalued Perov type
	contractive mappings the setup of a cone metric space equipped a directed
	graph.
	
	In this paper, we prove several fixed point results of Perov type
	generalized $T$-contractive mappings defined on cone $b$-metric space
	equipped with a generalized $c$-distance. The obtained results unify, extend and
	generalize several results in the current literature.
	
	\section{Preliminaries}
	In the sequel
	$\mathbb{N}, \mathbb{N}_{0}, \mathbb{R}_{+}, \mathbb{R}$
	will denote the set of natural numbers, natural numbers with $0$, positive real numbers and real numbers, respectively.
	
	Let $E$ be a real Banach space. A cone $P$ is the subset of $E$
	that satisfies
	\begin{enumerate}[label=\upshape{(\roman*)},leftmargin=30pt]
		\item $P$ is closed, non-empty and $P\neq \{ \theta \}$
		(where $\theta $ is the zero element of $E$);
		\item $\alpha ,\beta \in \mathbb{R}_{+},$ $\alpha ,\beta \geq 0$
		with $a,b\in P$\ implies $\alpha a+\beta b\in P;$
		\item $-P\cap P=\{ \theta \}$.
	\end{enumerate}
	
	With a cone $P$, the partial ordering $\preceq $\ on $E$ is
	defined as: $u_{1}\preceq u_{2}$ if $u_{2}-u_{1}\in P$ and conversely.
	Here $u_{1}\prec u_{2}$ is used to indicate that $u_{1}\preceq
	u_{2}$ but $u_{1}\neq u_{2}$ and $u_{1}\ll u_{2}$ means that $u_{2}-u_{1}$
	belongs to the interior of $P$ denoted by $intP.$ If $intP\neq \theta ,$
	then $P$ is known as solid cone.
	
	A cone $P$ is said to be semi monotone or normal if there exists a
	positive constant $\kappa >0$ such that $\theta \preceq u\preceq v$ implies
	that $\left \Vert u\right \Vert \leq \kappa \left \Vert v\right \Vert $\ for all
	$u,v\in P$. The smallest value of $\kappa $ is known as the normal constant
	of normal cone $P$. The definition of normal cone $P$ is also equivalent to the following condition that
	\begin{equation*}
		\inf \{ \parallel u_{1}+u_{2}\parallel :u_{1},u_{2}\in P\text{ and}\parallel
		u_{1}\parallel =\parallel u_{2}\parallel =1\}>0.
	\end{equation*}
	If $u=(u_{1},\dots,u_{n})^{T}$ and
	$v=(v_{1},\dots,v_{n})^{T}\in \mathbb{R}^{n}$,
	then $u\preceq v$ means that $u_{i}\leq v_{i},$ for all $i=1,\dots,n$.
	In this way, $P$ becomes a normal cone with $\kappa =1$ defined as
	$
	P=\{u=(u_{1},\dots,u_{n})^{T}\in \mathbb{R}^{n}:u_{i}\geq 0$ for $i=1,2,\dots,n\}
	$.
	
	\begin{definition}[\cite{CSDS, NHMS}] Let $X$ be a nonempty
		set and $E$ a real Banach space equipped with the partial ordering induced
		by the cone $P\subset E$. For a real number $b\geq 1$ and for all $%
		u_{1},u_{2},u_{3}\in X,$ suppose that the mapping $d:X\times X\rightarrow E$
		satisfies the following conditions:
		\begin{enumerate}[label=\upshape{(b\arabic*)}, leftmargin=*]
			\item \label{b1:defbmetric}
			$\theta \preceq d(u_{1},u_{2})$ and $d(u_{1},u_{2})=\theta $ if
			and only if $u_{1}=u_{2}$;
			
			\item \label{b2:defbmetric}
			$d(u_{1},u_{2})=d(u_{2},u_{1})$;
			
			\item \label{b3:defbmetric}
			$d(u_{1},u_{2})\preceq b[d(u_{1},u_{2})+d(u_{2},u_{3})].$
		\end{enumerate}
		Then $d$ is called a cone $b$-metric on $X$ and the pair $(X,d)$
		is known as a cone $b$-metric space.
		
		Clearly, for $b=1$, the cone $b$-metric space becomes a cone
		metric space. Moreover, for $E=\mathbb{R}$ and $P=[0,\infty )$, the cone $b$-metric on $X$ becomes $b$-metric on $X$.
	\end{definition}
	
	\begin{example}\label{ex1.2}
		Let $X=\left[ 0,1\right] ,$ $E=X\times
		X\ $and $P=\{ \left( x,y\right) \in E:x,y\geq 0\}=E.$ Define $d:X\times
		X\rightarrow E$ by $d(x,y)=\left( \left( \phi \left( x,y\right) \right)
		^{p},\alpha \left( \phi \left( x,y\right) \right) ^{p}\right) $, where $%
		\alpha \geq 1$ and $p\geq 1\ $and $\phi :X\times X\rightarrow
		\mathbb{R}_{+}$ satisfies $\phi \left( x,y\right) =0$ if and only if $x=y;$ $\phi
		\left( x,y\right) =\phi \left( y,x\right) \ $and $\phi \left( x,z\right)
		\leq \phi \left( x,y\right) +\phi \left( y,z\right) .$ Then $(X,d)$ is a
		complete cone $b$-metric space with $b\geq 2^{p-1}\geq 1$ along with $P$ a
		solid cone.
		
		Note that $d$ trivially satisfies \ref{b1:defbmetric} and \ref{b2:defbmetric}.
		The condition \ref{b3:defbmetric} is satisfied since $\left( a+b\right) ^{p}\leq 2^{p-1}\left( a^{p}+b^{p}\right)$ for any $a,b\in \mathbb{R}_{+},$ and hence
		\begin{eqnarray*}
			d(x,y) &=&\left( \left( \phi \left( x,y\right) \right) ^{p},\alpha \left(
			\phi \left( x,y\right) \right) ^{p}\right)  \\
			&\leq &\left( \left( \phi \left( x,z\right) +\phi \left( z,y\right) \right)
			^{p},\alpha \left( \phi \left( x,z\right) +\phi \left( z,y\right) \right)
			^{p}\right)  \\
			&\leq &2^{p-1}\left( \left( \phi \left( x,z\right) \right) ^{p}+\left( \phi
			\left( z,y\right) \right) ^{p},\alpha \left( \left( \phi \left( x,z\right)
			\right) ^{p}+(\phi \left( z,y\right) \right) ^{p}\right)  \\
			&=&2^{p-1}\left( \left( \phi (x,z\right) )^{p},\alpha \left( \phi \left(
			x,z\right) \right) ^{p})+(\left( \phi \left( z,y\right) \right) ^{p},\left(
			\phi \left( z,y\right) \right) ^{p}\right)  \\
			&\leq &b\left( d\left( x,z\right) +d\left( z,y\right) \right).
		\end{eqnarray*}
		Thus $d$ is a cone $b$-metric on $X$.
	\end{example}
	
	\begin{definition} Let $X$ be a $b$-cone metric space, $a\in E$ where $\theta \ll a$, and $\{u_{n}\}$ a sequence in $X.$ Then
		$\{u_{n}\}$ is called
		\begin{enumerate}[label=\upshape{(\roman*)},leftmargin=30pt]
			\item Cauchy sequence if there exists an $n_{0}$ in $
			\mathbb{N}
			$ such that $d(u_{n},u_{m})\ll a$ for all $n,m\geq n_{0}$;
			\item convergent if there exists an $n_{0}$ in
			$\mathbb{N}$ and $u\in X$ such that $d(u_{n},u)\ll a$ for all $n\geq n_{0}$;
			\item If every Cauchy sequence in $X$ is convergent, then $X$ is
			called a complete cone metric space.
		\end{enumerate}
		A cone $b$-metric space $X$ is said to be complete if every Cauchy
		sequence $\{u_{n}\}$ in $X$ is convergent in $X$.
		
		If a cone is normal, then the sequence $\{u_{n}\}$ converges to a
		point $u\in X$ if and only if $d(u_{n},u)\rightarrow 0$ as $n\rightarrow
		\infty. $
	\end{definition}
	\begin{definition}[\cite{Bao}]
		\label{de1.4}
		Let $(X,d)$ be a cone $b$-metric space. For $b\geq 1$, the mapping $q:X\times X\rightarrow E$ is
		called a generalized $c$-distance on $X$ if for any $u_{1},u_{2},u_{3}\in X,$
		the following properties are satisfied:
		
		\begin{enumerate}[label=\upshape{(q$_\arabic*$)}, leftmargin=*]%
			\item \label{q1:defgencdist}  
			$\theta \preceq q(u_{1},u_{2})$;
			\item \label{q2:defgencdist} 
			$q(u_{1},u_{3})\preceq b[q(u_{1},u_{2})+q(u_{2},u_{3})]$;
			\item \label{q3:defgencdist} 
			If $q(u,v_{n})\preceq e$ for all $n\geq 1$, where $e\in P,$
			then $q(u,v)\preceq be$, where $\{v_{n}\} \subseteq $ $X$ be a sequence
			convergent to $v\in X$;
			\item \label{q4:defgencdist} 
			If $e^{\ast }\in intP$, then an $e\in E$ with $\theta \ll e$
			exists such that for $q(u_{3},u_{1})\ll e$ and $q(u_{3},u_{2})\ll e$ gives $%
			d(u_{1},u_{2})\ll e^{\ast }$.
		\end{enumerate}
		
		For any $b$-metric space $(X,d)$, where $E=\mathbb{R}$ and $P=[0,\infty ),$ the $wt$-distance on a $b$-metric space $X$ is the generalized $c$-distance, but the converse does not hold in general \cite{HSA}.
		
		Also for $b=1$, the generalized $c$-distance becomes $c$-distance
		\cite{CSW}. Note that, for $b=1,$ $E=\mathbb{R}$ and $P=[0,\infty )$ it becomes
		the $w$-distance given in \cite{Kada96}. Moreover, $q(u_{1},u_{2})=\theta $
		does not imply $u_{1}=u_{2}$ and $q(u_{1},u_{2})\neq q(u_{2},u_{1})$ for
		all $u_{1},u_{2}\in X$.
	\end{definition}
	
	\begin{example}
		\label{ex1.5}
		Let $X=[0,1],$
		$E=\mathbb{R}^{2}\ $ and $P=\{ \left( x,y\right) \in E:x,y\geq 0\} \subseteq E.$
		Define $d:X\times X\rightarrow E$ as given in Example \ref{ex1.2}. Then $(X,d)$ is a
		complete cone $b$-metric space with $b\geq 2^{p-1}\geq 1$ along with $P$ a
		solid cone.
		
		Define $q\left( x,y\right) =\left( \psi \left( y\right) ,\eta \psi
		\left( y\right) \right) ,$ where $\eta \geq 1$ and $\psi :X\rightarrow
		\mathbb{R}_{+}$ is of the form $\psi \left( t\right) =at^{p}$ for some $p>1,$ where
		$a>0.$ Then $q$ is a generalized\ $c$-distance on $X.$
		
		Note that $q$ is trivially satisfying \ref{q1:defgencdist} since for all $x,y\in X,$
		$$q\left(x,y\right) =\left( \psi \left( y\right) ,\eta \psi \left( y\right) \right)
		=\left( ay^{p},\eta ay^{p}\right) \succeq \theta. $$
		
		For \ref{q2:defgencdist}, for any $x,y,z\in X,$
		\begin{eqnarray*}
			q\left( x,z\right)  &=&\left( \psi \left( z\right) ,\eta \psi \left(
			z\right) \right)  \\
			&=&\left( az^{p},\eta az^{p}\right)  \\
			&\leq &b\left( a\left( y^{p}+z^{p}\right) ,\eta a\left( y^{p}+z^{p}\right)
			\right)  \\
			&=&b\left( \left( ay^{p},\eta ay^{p}\right) +\left( az^{p},\eta
			az^{p}\right) \right)  \\
			&=&b\left( q\left( x,y\right) +q\left( y,z\right) \right) .
		\end{eqnarray*}%
		For \ref{q3:defgencdist}, one needs to show that for any $\left \{ v_{n}\right \} $ in $X$
		that converges to $v$ in $X,$ if $q(u,v_{n})\preceq e$ for all $n\geq 1$,
		where $e=\left( e_{1},e_{2}\right) \in P,$ then $q\left( u,v\right) \preceq be.$
		Since $q\left( u,v_{n}\right) =\left( \psi \left( v_{n}\right) ,\eta
		\psi \left( v_{n}\right) \right) =\left( av_{n}^{p},\eta av_{n}^{p}\right)
		\preceq e,$ that is $\left( e_{1}-av_{n}^{p},e_{2}-\eta av_{n}^{p}\right)
		\in P$ or equivalently, $e_{1}-av_{n}^{p}\geq 0\ $and $e_{2}-\eta
		av_{n}^{p}\geq 0,$ taking limit as $n\rightarrow \infty $ yields $%
		e_{1}-av^{p}\geq 0\ $and $e_{2}-\eta av^{p}\geq 0,$
		which gives $q\left( u,v\right) =\left( \psi \left( v\right) ,\eta
		\psi \left( v\right) \right) =\left( av^{p},\eta av^{p}\right) \preceq e\leq
		be.$
		
		Finally to show \ref{q4:defgencdist}, we need to show that if $e^{\ast }\in
		intP$, then an $e\in E$ with $\theta \ll e$ exists such that for $%
		q(u_{3},u_{1})\ll e$ and $q(u_{3},u_{2})\ll e$ gives $d(u_{1},u_{2})\ll
		e^{\ast }.$
		
		Condition $q(u_{3},u_{1})=\left( \psi \left( u_{1}\right) ,\eta
		\psi \left( u_{1}\right) \right) =\left( au_{1}^{p},\eta au_{1}^{p}\right)
		\ll e=\left( e_{1},e_{2}\right) $ is equivalent to
		$e_{1}-\psi \left( u_{1}\right) \geq 0$ and $e_{2}-\eta \psi
		\left( u_{1}\right) \geq 0$, and the condition
		$q(u_{3},u_{2})=\left( \psi \left( u_{2}\right) ,\eta \psi \left(
		u_{2}\right) \right) =\left( au_{2}^{p},\eta au_{2}^{p}\right) \ll e\ $
		is equivalent to
		$e_{1}-\psi \left( u_{2}\right) \geq 0$ and $e_{2}-\eta \psi
		\left( u_{2}\right) \geq 0.$
		
		Now
		\begin{eqnarray*}
			d(u_{1},u_{2}) &=&\left( \left( \phi \left( u_{1},u_{2}\right) \right)
			^{p},\alpha \left( \phi \left( u_{1},u_{2}\right) \right) ^{p}\right)  \\
			&\leq &\left( \left( \phi \left( u_{1},u_{3}\right) +\phi \left(
			u_{3},u_{2}\right) \right) ^{p},\alpha \left( \phi \left( u_{1},u_{3}\right)
			+\phi \left( u_{3},u_{2}\right) \right) ^{p}\right)  \\
			&=&2^{p-1}\left( \left( \phi \left( u_{1},u_{3}\right) )^{p}+(\phi \left(
			u_{3},u_{2}\right) )^{p},\alpha \left( \left( \phi \left( u_{1},u_{3}\right)
			\right) ^{p}+(\phi \left( u_{3},u_{2}\right) \right) ^{p}\right) \right)  \\
			&=&2^{p-1}\left( \left( (\phi \left( u_{1},u_{3}\right) )^{p},\alpha \left(
			\left( \phi \left( u_{1},u_{3}\right) \right) ^{p})+((\phi \left(
			u_{3},u_{2}\right) )^{p},\alpha (\phi \left( u_{3},u_{2}\right) \right)
			^{p}\right) \right)  \\
			&\leq &2^{p-1}(\left( k_{1}\psi \left( u_{1}\right) ,\alpha k_{1}\psi \left(
			u_{1}\right) \right) +\left( k_{2}\psi \left( u_{2}\right) ,\alpha k_{2}\psi
			\left( u_{2}\right) \right) ) \\
			&=&2^{p-1}(k_{1}\left( e_{1},\frac{\alpha }{\eta }e_{2}\right) +k_{2}\left(
			e_{1},\frac{\alpha }{\eta }e_{2}\right) ) \quad \quad \mbox{
				\textup{(}$k_{1},k_{2}\geq 0$\textup{)}} \\
			&=&2^{p-1}\left( k_{1}+k_{2}\right) \left( e_{1},\frac{\alpha }{\eta }%
			e_{2}\right)  \\
			&=&\left( e_{1}^{\ast },e_{2}^{\ast }\right)  \\
			&=&e^{\ast },
		\end{eqnarray*}
		where $e=\left( e_{1},e_{2}\right) =\left( \frac{1}{2^{p-1}\left(
			k_{1}+k_{2}\right) }e_{1}^{\ast },\frac{\eta }{2^{p-1}\alpha \left(
			k_{1}+k_{2}\right) }e_{2}^{\ast }\right) .$
		Clearly $\theta \ll e,$ and for any $u_{1}$ and $u_{2}$ such that
		$q(u_{3},u_{1})=\left( au_{1}^{p},\eta au_{1}^{p}\right) \ll e$ and $q(u_{3},u_{2})=\left( au_{2}^{p},\eta au_{2}^{p}\right) \ll e$ holds $d(u_{1},u_{2})\ll e^{\ast }.$
		
		Note that the conditions $q(u_{3},u_{1})=\left( au_{1}^{p},\eta
		au_{1}^{p}\right) \ll e$ and $q(u_{3},u_{2})=\left( au_{2}^{p},\eta
		au_{2}^{p}\right) \ll e$ are equivalent to
		\begin{align*}
			u_{1} & \leq \frac{1}{\sqrt[p]{2^{p-1}a\left( k_{1}+k_{2}\right) }}\sqrt[p]{%
				e_{1}^{\ast }} \\
			u_{1} & \leq \frac{1}{\sqrt[p]{2^{p-1}a\alpha \left(
					k_{1}+k_{2}\right) }}\sqrt[p]{e_{2}^{\ast }}, \\
			u_{2} & \leq \frac{\eta }{\sqrt[p]{2^{p-1}a\left(
					k_{1}+k_{2}\right) }}\sqrt[p]{e_{1}^{\ast }}, \\
			u_{2} & \leq \frac{\eta }{\sqrt[p]{2^{p-1}a\alpha \left( k_{1}+k_{2}\right) }}\sqrt[p]{e_{2}^{\ast }},
		\end{align*}
		which can also be written as
		\begin{multline*}
			\left( u_{1},u_{2}\right) \leq \left( \min \{ \frac{\sqrt[p]{
					e_{1}^{\ast }}}{\sqrt[p]{2^{p-1}a\left( k_{1}+k_{2}\right) }},\frac{\sqrt[p]{
					e_{2}^{\ast }}}{\sqrt[p]{2^{p-1}a\alpha \left( k_{1}+k_{2}\right) }}\},\right. \\
			\left. \min \{ \frac{\eta \sqrt[p]{e_{1}^{\ast }}}{\sqrt[p]{2^{p-1}a\left(
					k_{1}+k_{2}\right) }},\frac{\eta \sqrt[p]{e_{2}^{\ast }}}{\sqrt[p]{
					2^{p-1}a\alpha \left( k_{1}+k_{2}\right) }}\} \right) .
		\end{multline*}
	\end{example}
	
	The following useful lemma is needed in the sequel.
	\begin{lemma}\label{lemma:cdistconv}
		Let $q$ be a generalized $c$-distance on
		$X $ equipped with cone $b$-metric space $(X,d).$ For sequences $\{x_{n}\}$
		and $\{y_{n}\}$ in $X$, two
		sequences $\{a_{n}\}$ and $\{b_{n}\}$  in $P$ converging to $\theta $ and for any $u_{1},u_{2},u_{3}\in X$, the following statements hold:
		
		\begin{enumerate}[label=\upshape{(\roman*)},leftmargin=30pt]
			\item \label{ilemma:cdistconv} if for all $n\in \mathbb{N}$, $q(x_{n},u_{1})\preceq a_{n}$ and $%
			q(x_{n},u_{2})\preceq b_{n}$, then $u_{1}=u_{2}$. In particular, for $%
			q(u_{3},u_{1})=\theta $ and $q(u_{3},u_{2})=\theta $ implies $u_{1}=u_{2}$;
			\item \label{iilemma:cdistconv} if for all $n\in \mathbb{N}$, $q(x_{n},y_{n})\preceq a_{n}$ and
			$q(x_{n},u_{1})\preceq b_{n}$, then $\{y_{n}\}$ converges to $u_{1}$;
			\item \label{iiilemma:cdistconv} if $q(x_{n},x_{m})\preceq a_{n}$ for all $m,n\in \mathbb{N}$
			with $m\geq n,$ then $\{x_{n}\}$ is a Cauchy sequence in $X$;
			\item \label{ivlemma:cdistconv} if $q(u_{1},x_{n})\preceq a_{n}$ for all $n\in \mathbb{N}$,
			then $\{x_{n}\}$ is a Cauchy sequence in $X$.
		\end{enumerate}
	\end{lemma}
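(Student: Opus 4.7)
My plan is to derive all four conclusions from axiom \ref{q4:defgencdist}, together with the standard cone lemma that if $\{a_n\}\subset P$ converges to $\theta$ (in norm), then for every $e\in\operatorname{int}P$ there is an index $N$ with $a_n\ll e$ for all $n\ge N$ (this follows because $\operatorname{int}P$ is open and $e-a_n\to e$). Another auxiliary fact I will use for \ref{ilemma:cdistconv} is that if $d(u_1,u_2)\ll e^{\ast}$ for every $e^{\ast}\in\operatorname{int}P$, then $d(u_1,u_2)=\theta$, so by \ref{b1:defbmetric} we get $u_1=u_2$.

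For \ref{ilemma:cdistconv}, I fix an arbitrary $e^{\ast}\gg\theta$, apply \ref{q4:defgencdist} to obtain a corresponding $e\gg\theta$, and use the cone lemma to pick $N$ with $a_N\ll e$ and $b_N\ll e$. Then $q(x_N,u_1)\preceq a_N\ll e$ and $q(x_N,u_2)\preceq b_N\ll e$, so \ref{q4:defgencdist} gives $d(u_1,u_2)\ll e^{\ast}$; since $e^{\ast}$ was arbitrary, $d(u_1,u_2)=\theta$ and hence $u_1=u_2$. The special case $q(u_3,u_1)=q(u_3,u_2)=\theta$ reduces to this by taking the constant sequences $x_n=u_3$, $a_n=b_n=\theta$.

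Parts \ref{iilemma:cdistconv}--\ref{ivlemma:cdistconv} follow the same template. For \ref{iilemma:cdistconv} I fix $e^{\ast}\gg\theta$, get the matching $e\gg\theta$ from \ref{q4:defgencdist}, and pick $N$ so large that $a_n\ll e$ and $b_n\ll e$ for $n\ge N$; applying \ref{q4:defgencdist} to $q(x_n,y_n)\ll e$ and $q(x_n,u_1)\ll e$ yields $d(y_n,u_1)\ll e^{\ast}$ for all $n\ge N$, which is exactly convergence of $\{y_n\}$ to $u_1$. For \ref{iiilemma:cdistconv}, given $e^{\ast}\gg\theta$ I again produce $e\gg\theta$ and choose $N$ with $a_N\ll e$; then for $n,m\ge N$ with $m\ge n\ge N$, both $q(x_N,x_n)\preceq a_N\ll e$ and $q(x_N,x_m)\preceq a_N\ll e$ hold, so \ref{q4:defgencdist} gives $d(x_n,x_m)\ll e^{\ast}$, proving the Cauchy property. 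Part \ref{ivlemma:cdistconv} is essentially the same argument with the roles reversed: pick $N$ so that $a_n\ll e$ for all $n\ge N$, and apply \ref{q4:defgencdist} to $q(u_1,x_n)\ll e$ and $q(u_1,x_m)\ll e$ to conclude $d(x_n,x_m)\ll e^{\ast}$.

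The only delicate step is the bookkeeping in \ref{iiilemma:cdistconv}: one must notice that although the hypothesis $q(x_n,x_m)\preceq a_n$ mixes two indices, a single fixed anchor $x_N$ suffices because then both $q(x_N,x_n)$ and $q(x_N,x_m)$ are controlled by the \emph{same} tail term $a_N$, which can be made arbitrarily small. Once this observation is in place, \ref{q4:defgencdist} does all the work, and no further property of $q$ or $d$ is needed beyond those already available in Definition~\ref{de1.4}.
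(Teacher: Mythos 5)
Your proof is correct. Note that the paper itself gives no argument for Lemma \ref{lemma:cdistconv} (it is stated as a known auxiliary result, essentially imported from the literature on $c$- and $wt$-distances), so there is no in-paper proof to compare against; your write-up supplies the standard argument and does so accurately. The two auxiliary facts you invoke are both valid without assuming normality of the cone: if $a_n\rightarrow\theta$ in norm and $e\in \operatorname{int}P$, openness of $\operatorname{int}P$ gives $e-a_n\in \operatorname{int}P$ eventually; and if $d(u_1,u_2)\ll e^{\ast}$ for every $e^{\ast}\in \operatorname{int}P$, then applying this to $e^{\ast}/n$ and using closedness of $P$ gives $d(u_1,u_2)\in P\cap(-P)=\{\theta\}$, whence $u_1=u_2$ by \ref{b1:defbmetric}. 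You also implicitly use that $x\preceq y\ll e$ implies $x\ll e$ (since $\operatorname{int}P+P\subseteq \operatorname{int}P$), which is the step turning $q(x_N,u_1)\preceq a_N\ll e$ into the hypothesis of \ref{q4:defgencdist}; that is fine. Your observation about the single anchor index $N$ in part \ref{iiilemma:cdistconv} is exactly the right bookkeeping, and the reduction of the ``in particular'' clause of \ref{ilemma:cdistconv} to constant sequences is legitimate. The only cosmetic caveat is that \ref{q4:defgencdist} as stated in Definition \ref{de1.4} quantifies over a single $u_3$; one should read it (as is standard and as the paper clearly intends) as producing an $e$ that works uniformly in the first argument, which is what your applications with varying anchors $x_N$, $x_n$, $u_1$ require.
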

	
	Let $M_{n\times n\text{ }}(\mathbb{R}^{+})$ denote the set of all $n\times n$ matrices with non negative
	elements. It is well known that if $A$ is any square matrix of order $n$,
	then $A(P)\subset P$ if and only if $A\in M_{n,n}(\mathbb{R}^{+})$.
	A matrix $A\in M_{n,n}(\mathbb{R}^{+})$ is called convergent to zero if
	$A^{n}\longrightarrow \Theta $ as $n\longrightarrow \infty $, where $\Theta $ is the null matrix of size $n.$
	
	Regarding this class of matrices, we have the following classical result in matrix analysis
	(see \cite{Precup09}, \cite{Rus79} and \cite{Turinici90}).
	
	\begin{theorem}\label{1.7}
		Let $A\in M_{n,n}(\mathbb{R}^{+})$. The following statements are equivalent:
		\begin{enumerate}[label=\upshape{(\roman*)},leftmargin=30pt]
			\item $A^{n}\rightarrow \Theta $, as $n\rightarrow \infty $;
			\item eigenvalues of $A$ lie in the open unit disc, that is, for all $%
			\lambda \in C$ satisfying $\det (A-\lambda I_{n})=0,$ we have $\left \vert
			\lambda \right \vert <1$ ;
			\item matrix $I_{n}-A$ is non-singular and $%
			(I_{n}-A)^{-1}=I_{n}+A+A^{2}+...+A^{m}+...$;
			\item matrix $(I_{n}-A)$ is non-singular and $(I_{n}-A)^{-1}$ has
			non-negative elements.
			
		\end{enumerate}
	\end{theorem}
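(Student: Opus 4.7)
My plan is to prove the cyclic chain of implications (i) $\Rightarrow$ (ii) $\Rightarrow$ (iii) $\Rightarrow$ (iv) $\Rightarrow$ (i), which suffices for the full equivalence. The underlying theme is that the entrywise non-negativity of $A$ (ensured by $A\in M_{n,n}(\mathbb{R}^{+})$) lets me replace analytic convergence arguments by order-theoretic ones, and this becomes decisive at the last step.

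For (i) $\Rightarrow$ (ii), I would pick any eigenpair $(\lambda,v)$ with $v\neq 0$ and use $A^{m}v=\lambda^{m}v$; since $A^{m}\to\Theta$ entrywise, $|\lambda|^{m}\,\|v\|\to 0$, so $|\lambda|<1$. For (ii) $\Rightarrow$ (iii), the hypothesis that $1$ is not an eigenvalue of $A$ immediately gives $\det(I_{n}-A)\neq 0$, so $I_{n}-A$ is invertible. To obtain the Neumann-series representation, I would start from the telescoping identity
\[
(I_{n}-A)(I_{n}+A+A^{2}+\cdots+A^{m})=I_{n}-A^{m+1},
\]
invoke the fact that $\rho(A)<1$ implies $A^{m+1}\to\Theta$ (via Jordan form, or Gelfand's spectral-radius formula combined with submultiplicativity of any matrix norm), pass to the limit, and then left-multiply by $(I_{n}-A)^{-1}$.

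The implication (iii) $\Rightarrow$ (iv) is essentially automatic: each $A^{k}$ has non-negative entries by induction, so does each partial sum $\sum_{k=0}^{m}A^{k}$, and the entrywise limit preserves non-negativity. The main obstacle is (iv) $\Rightarrow$ (i), where only non-negativity of $(I_{n}-A)^{-1}$ and of $A$ itself is available; neither $\rho(A)<1$ nor convergence of the Neumann series can be presumed. My plan here is to rewrite the same telescoping identity as
\[
(I_{n}-A)^{-1}=\sum_{k=0}^{m}A^{k}+(I_{n}-A)^{-1}A^{m+1},
\]
and observe that both terms on the right-hand side have non-negative entries by hypothesis. Consequently the sequence of partial sums $\{\sum_{k=0}^{m}A^{k}\}_{m\geq 0}$ is entrywise non-decreasing (since adding $A^{m+1}$ contributes non-negative entries) and entrywise bounded above by the fixed matrix $(I_{n}-A)^{-1}$. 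Each entry is therefore a bounded monotone sequence of real numbers, so it converges, and this forces the general term $A^{m}$ to tend to $\Theta$ entrywise, yielding (i). The whole argument thus turns on exploiting the cone-positivity of $A$ and of $(I_{n}-A)^{-1}$ to replace the usual spectral hypothesis by a monotone-convergence argument.
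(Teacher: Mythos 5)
Your proof is correct and complete. Note, however, that the paper itself offers no proof of Theorem \ref{1.7} at all: it is stated as a classical result in matrix analysis with citations to Precup, Rus, and Turinici, so there is nothing in the paper to compare your argument against step by step. On its own merits, your cyclic chain works: the eigenvector argument for (i) $\Rightarrow$ (ii) is sound (and survives the eigenvector being complex, since entrywise convergence $A^{m}\to\Theta$ gives $A^{m}v\to 0$ for any fixed complex $v$); the Neumann-series argument for (ii) $\Rightarrow$ (iii) is standard, with the needed fact that $\rho(A)<1$ forces $A^{m}\to\Theta$ correctly delegated to the Jordan form or Gelfand's formula; (iii) $\Rightarrow$ (iv) is, as you say, immediate from entrywise non-negativity of each $A^{k}$ and of entrywise limits. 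The only genuinely delicate step is (iv) $\Rightarrow$ (i), and your treatment is the right one: the identity $(I_{n}-A)^{-1}=\sum_{k=0}^{m}A^{k}+(I_{n}-A)^{-1}A^{m+1}$ together with non-negativity of both summands shows the partial sums $\sum_{k=0}^{m}A^{k}$ are entrywise non-decreasing and bounded above by $(I_{n}-A)^{-1}$, so each entry converges and the general term $A^{m}$ must tend to $\Theta$. You correctly identify that this is the one place where the hypothesis $A\in M_{n,n}(\mathbb{R}^{+})$ is indispensable (the equivalence of (iv) with the others fails for matrices with entries of mixed sign). The only cosmetic redundancy is that your proof of (ii) $\Rightarrow$ (iii) already contains a proof of (ii) $\Rightarrow$ (i), so the cycle could be shortened, but this does not affect correctness.
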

	
	Perov \cite{Perov64}$\ $obtained the following generalization of
	Banach contraction principle for the case when metric $d$ take values in
	$\mathbb{R}_{+}^{n},$ that is, $d\left( x,y\right) \in
	\mathbb{R}_{+}^{n}$ and known as generalized metric space.
	
	\begin{theorem}\label{1.8}
		Let $(X,d)$ be a complete generalized
		metric space, $f$ be a self mapping on $X$ and $A\in M_{n,n}(\mathbb{R}^{+})$ a matrix convergent to zero. If for any $u,v\in X,$ we have
		\begin{equation*}
			d(f(u),f(v))\leq A(d(u,v)),
		\end{equation*}
		then the following statements hold:
		\begin{enumerate}[label=\upshape{\arabic*.},leftmargin=30pt]
			\item  $f$ has a unique fixed point $u^{\ast }\in X.$
			\item  The Picard iterative sequence $u_{n}=f^{n}(u_{0}),$ $n\in\mathbb{N}$
			converges to $u^{\ast }$\ for all $u_{0}\in X$.
			\item  $d(u_{n},u^{\ast })\leq A^{n}(I_{n}-A)^{-1}(d(u_{0},u_{1})),$
			$n\in\mathbb{N}.$
			\item  if $g:X\rightarrow X$ satisfies the condition $d(f(u),g(u))\leq a$
			for all $u\in X$ and some $a\in\mathbb{R}^{n},$ then for the sequence
			$v_{n}=g^{n}(u_{0}),$ $n\in\mathbb{N},$ the inequality
			\begin{equation*}
				d(v_{n},u^{\ast })\leq (I_{n}-A)^{-1}(a)+A^{n}(I_{n}-A)^{-1}(d(u_{0},u_{1}))
			\end{equation*}
			is valid for all $n\in\mathbb{N}.$
		\end{enumerate}
	\end{theorem}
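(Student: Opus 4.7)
The plan is to run the classical Banach iteration argument with matrix inequalities in place of scalar ones. The core algebraic fact I will use repeatedly is Theorem \ref{1.7}: any $A \in M_{n,n}(\mathbb{R}^+)$ with $A^n \to \Theta$ satisfies that $I_n - A$ is invertible with non-negative entries, and the Neumann series $\sum_{k \ge 0} A^k$ converges to $(I_n - A)^{-1}$. Consequently, $A$ itself and the matrix $(I_n - A)^{-1}$ both map $P = \mathbb{R}^n_+$ into $P$, so they preserve the partial order $\preceq$.

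First I would fix $u_0 \in X$ and put $u_n = f^n(u_0)$. Iterating the hypothesis gives $d(u_{k+1}, u_k) \preceq A \, d(u_k, u_{k-1}) \preceq A^k \, d(u_1, u_0)$. Summing and using the component-wise triangle inequality, for $m > n$:
\begin{equation*}
d(u_n, u_m) \preceq \sum_{k=n}^{m-1} A^k \, d(u_0, u_1) = A^n \Bigl( \sum_{j=0}^{m-n-1} A^j \Bigr) d(u_0, u_1) \preceq A^n (I_n - A)^{-1} d(u_0, u_1).
\end{equation*}
Since $A^n \to \Theta$, the right-hand side tends to zero, so $\{u_n\}$ is Cauchy and, by completeness, converges to some $u^* \in X$. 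Letting $m \to \infty$ in the same chain (taking limits preserves $\preceq$ in the normal cone $\mathbb{R}^n_+$) yields the estimate in item $3$. To verify $f(u^*) = u^*$, I would write $d(f(u^*), u^*) \preceq A \, d(u^*, u_n) + d(u_{n+1}, u^*)$ and let $n \to \infty$; for uniqueness, given another fixed point $v^*$, the contraction yields $(I_n - A)\, d(u^*, v^*) \preceq \theta$, so applying the non-negative matrix $(I_n - A)^{-1}$ forces $d(u^*, v^*) = \theta$. Item $2$ is immediate from the construction.

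For item $4$, set $v_n = g^n(u_0)$ and telescope:
\begin{equation*}
d(v_n, u^*) = d(g(v_{n-1}), f(u^*)) \preceq d(g(v_{n-1}), f(v_{n-1})) + d(f(v_{n-1}), f(u^*)) \preceq a + A\, d(v_{n-1}, u^*).
\end{equation*}
Iterating $n$ times produces
\begin{equation*}
d(v_n, u^*) \preceq (I_n + A + \cdots + A^{n-1})\, a + A^n\, d(v_0, u^*) \preceq (I_n - A)^{-1} a + A^n\, d(u_0, u^*),
\end{equation*}
and the bound $d(u_0, u^*) \preceq (I_n - A)^{-1} d(u_0, u_1)$ from item $3$ (with $n=0$) gives the claim.

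The main delicacy is making sure each scalar-geometric-series manipulation (bounding a partial sum by the full sum, interchanging limit and inequality, dividing by $1 - q$) transfers legitimately to the matrix setting. This is exactly what Theorem \ref{1.7} licenses: the invertibility of $I_n - A$ and the non-negativity of $(I_n - A)^{-1}$ are precisely what allow one to multiply cone inequalities by $(I_n - A)^{-1}$ without flipping the order. Once this is absorbed, the rest of the proof is a routine Picard argument.
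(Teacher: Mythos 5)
Your argument is correct and is the standard Picard-iteration proof of Perov's theorem; note that the paper itself states Theorem \ref{1.8} without proof, as a classical result cited from Perov, so there is no in-paper argument to compare against. Every order-theoretic step you flag as delicate is indeed covered by Theorem \ref{1.7} (the partial Neumann sums $I_n+A+\cdots+A^{m}$ are dominated entrywise by $(I_n-A)^{-1}$, and multiplying a cone inequality by the non-negative matrix $(I_n-A)^{-1}$ preserves $\preceq$), and the passage to the limit in item 3 is legitimate because the cone $\mathbb{R}^n_+$ is closed.
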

	
	The role of vector valued norm has much importance in the study of
	semi-linear operator systems. For details, we refer to \cite{Precup09}.
	Let $\mathcal{B}(E)$ be the set of all bounded linear operators on
	$E$, and $L(E)$ be the set of all linear operators on $E$.
	
	Note that $\mathcal{B}(E)$ is a Banach algebra. If $A\in
	\mathcal{B}(E)$, then
	\begin{equation*}
		r(A)=\lim \limits_{n\rightarrow \infty }\left \Vert A^{n}\right \Vert ^{\frac{1}{n}}=\inf_{n}\left \Vert A^{n}\right \Vert ^{\frac{1}{n}}
	\end{equation*}
	is the spectral radius of $A$. We\ write $\mathcal{B}(E)^{-1}$ for the set
	of all invertible elements in $\mathcal{B}(E).$
	Let us remark that if $r(A)<1 $, then
	\begin{enumerate}[label=\upshape{(\roman*)},leftmargin=30pt]
		\item series $\sum \limits_{n=0}^{\infty }A^{n}$ is absolutely
		convergent;
		\item the element $I-A$ is invertible in $\mathcal{B}(E)$ and
		$\sum \limits_{n=0}^{\infty }A^{n}=(I-A)^{-1}.$
	\end{enumerate}
	If $A_{1},A_{2}\in \mathcal{B}(E)$ and $A_{1}A_{2}=A_{2}A_{1}$
	then $r(A_{1}A_{2})\leq r(A_{2})r(A_{1}).$
	If\ $A\in \mathcal{B}(E)$ and $A^{-1}\in \mathcal{B}(E)$ exists,
	then $r(A^{-1})=1/r(A).$
	Further, if $\parallel A\parallel <1$, then $I-A$ is invertible and%
	\begin{equation*}
		\left \Vert (I-A)^{-1}\right \Vert \leq \frac{1}{1-\left \Vert A\right \Vert}.
	\end{equation*}
	Note that $r(A)\leq \left \Vert A\right \Vert $.
	
	\begin{remark}[\cite{Cvetkovic15}] \label{1.9}
		Let $X$ be a $b$-cone
		metric space, $P\subseteq E$ be a cone in $E$ and $A:E\rightarrow E$ be a linear
		operator. The following conditions are equivalent:
		\begin{enumerate}[label=\upshape{a\arabic*)}, leftmargin=*]
			\item $A$ is increasing, that is$,$ $u\preceq v$ implies that $%
			A(u)\preceq A(v);$
			\item $A$ is positive, that is, $A(P)\subset P;$
			\item If $\left \Vert A\right \Vert <1$ and $A$ is positive, then
			inverse of $I-A$ is positive.
		\end{enumerate}
	\end{remark}
	\begin{remark}[\cite{ANR17}] \label{1.10}
		Let $P\subseteq E$ be a cone in $E.$
		Suppose that $A:E\rightarrow E$ is a bounded linear operator with
		$A\left( P\right) \subset P$ and $r(A)<1$. If $u\preceq A(u)$ for $u$ in $P$,
		then $u=\theta $.
	\end{remark}
	
	Now we give the definitions of following Perov type $T$-Hardy
	Rogers contractions in the setup of $b$-cone metric spaces via generalized $c$-distance.
	\begin{definition}{1.11}
		Let $(X,d)$ be a $b$-cone metric
		space, $q$ a $c$-distance on $X.$ Suppose that $f,T:X\rightarrow X$ are two
		self mappings. A mapping $f$ is said to be:
		
		\begin{enumerate}
			\item[(I)] a Perov $THR_{1}-$contraction if there exists\ a linear bounded
			operator $A:E\rightarrow E$ with $r(bA)<1$ and $A\left( P\right) \subset P$
			such that for all $x_{1},x_{2}\in X$,%
			\begin{equation}
				q(Tfx_{1},Tfx_{2})\leq A(U_{T}(x_{1},x_{2})),  \tag{1.1}
			\end{equation}%
			holds, where%
			\begin{equation*}
				U_{T}(x_{1},x_{2})\in
				\{q(Tx_{1},Tx_{2}),q(Tx_{1},Tfx_{1}),q(Tx_{2},Tfx_{2})\}.
			\end{equation*}
			
			\item[(II)] a Perov $THR_{2}-$contraction if there exist\ linear bounded
			operators $A_{1},A_{2},A_{3}:E\rightarrow E$ with $A_{k}\left( P\right)
			\subset P$ for $k=1,2,3$, $(I-A_{3})^{-1}\in \mathcal{B}(E)$ and $%
			r[b(I-A_{3})^{-1}(A_{1}+A_{2})]<1$ such that for all $x_{1},x_{2}\in X,$%
			\begin{equation}
				q(Tfx_{1},Tfx_{2})\leq A_{1}\left( q(Tx_{1},Tx_{2})\right) +A_{2}\left(
				q(Tx_{1},Tfx_{1})\right) +A_{3}\left( q(Tx_{2},Tfx_{2})\right) ,  \tag{1.2}
			\end{equation}
			holds.
		\end{enumerate}
	\end{definition}
	
	\begin{definition}\label{1.12}
		Let $(X,d)$ be a $b$-cone metric
		space and $P$ a solid cone. A mapping $T:X\rightarrow X$ is said to be
		\begin{enumerate}
			\item[a)] continuous if, for all $\{u_{n}\}$ in $X$ such that $\lim
			\limits_{n\rightarrow \infty }u_{n}=u^{\ast }$ implies that $\lim
			\limits_{n\rightarrow \infty }Tu_{n}=Tu^{\ast }.$
			\item[b)] sequentially convergent if, for every sequence $\{u_{n}\}$ in $X$
			such that $\{Tu_{n}\}$ is convergent implies that $\{u_{n}\}$ is also
			convergent.
		\end{enumerate}
	\end{definition}
	\section{Fixed Points Results}
	
	In this section, we obtain several fixed point results for $T$%
	-Hardy Rogers and Perov type contractive mappings via $c$-distances in the
	setup of cone $b$-metric spaces.
	
	We start with the following result.
	
	\begin{theorem}\label{2.1}
		Let $(X,d)$ be a complete cone $b$%
		-metric space, $P$ a solid cone and $q$ a generalized\ $c$-distance on $X.$
		Suppose that $T:X\rightarrow X$ is a one to one, continuous and sequentially
		convergent. If the mapping $f:X\rightarrow X$ is a Perov $THR_{1}-$%
		contraction, then $f$ has a unique fixed point $u^{\ast }\in X.$ And for any
		$x_{0}\in X,$ the iterative sequence $\{f^{n}x_{0}\}$ converges to the fixed
		point of $f$. Moreover $q(Tu^{\ast },Tu^{\ast })=\theta $ provided that $%
		u^{\ast }=fu^{\ast }$.
	\end{theorem}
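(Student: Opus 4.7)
The plan is to adapt the Picard iteration argument to the generalized $c$-distance and Perov setting. Fix $x_0 \in X$ and set $x_n = f^n x_0$, so $Tx_{n+1} = Tfx_n$. Applying (1.1) to the pair $(x_{n-1}, x_n)$, the set $U_T(x_{n-1}, x_n)$ reduces to either $q(Tx_{n-1}, Tx_n)$ or $q(Tx_n, Tx_{n+1})$. If the contraction selects the latter, Remark \ref{1.10} (noting $r(A) \leq r(bA) < 1$ since $b \geq 1$) forces $q(Tx_n, Tx_{n+1}) = \theta$; otherwise iteration gives $q(Tx_n, Tx_{n+1}) \preceq A^n(q(Tx_0, Tx_1))$. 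Telescoping via \ref{q2:defgencdist}, for any $m > n$,
\[
q(Tx_n, Tx_m) \preceq \sum_{k=0}^{m-n-1} b^{k+1} A^{n+k}(q(Tx_0, Tx_1)) \preceq b A^n (I - bA)^{-1}(q(Tx_0, Tx_1)) =: a_n,
\]
where $(I - bA)^{-1}$ exists because $r(bA) < 1$, positivity of $A$ and of $(I - bA)^{-1}$ keeps $a_n \in P$, and $A^n \to 0$ in $\mathcal{B}(E)$ ensures $a_n \to \theta$. Lemma \ref{lemma:cdistconv}\ref{iiilemma:cdistconv} then yields that $\{Tx_n\}$ is Cauchy; by completeness it converges to some $v \in X$, sequential convergence of $T$ gives $x_n \to u^*$ for some $u^* \in X$, and continuity of $T$ identifies $v = Tu^*$.

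The hard step is establishing $fu^* = u^*$, because $U_T(x_n, u^*)$ in (1.1) can be any of $q(Tx_n, Tu^*)$, $q(Tx_n, Tx_{n+1})$, or $q(Tu^*, Tfu^*)$, with the choice possibly depending on $n$. Property \ref{q3:defgencdist} applied to $q(Tx_n, Tx_m) \preceq a_n$ with $Tx_m \to Tu^*$ first gives $q(Tx_n, Tu^*) \preceq b a_n \to \theta$. I then split the indices $n$ into three subsequences according to the choice made. On indices where $U_T = q(Tx_n, Tu^*)$, one has $q(Tx_{n+1}, Tfu^*) \preceq b A(a_n) \to \theta$; on indices where $U_T = q(Tx_n, Tx_{n+1})$, one has $q(Tx_{n+1}, Tfu^*) \preceq A^{n+1}(q(Tx_0, Tx_1)) \to \theta$. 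If the third option is selected infinitely often, I pass to such a subsequence $\{n_k\}$: the constant bound $q(Tx_{n_k+1}, Tfu^*) \preceq A(q(Tu^*, Tfu^*))$ together with $Tx_{n_k+1} \to Tu^*$ and property \ref{q3:defgencdist} yields $q(Tu^*, Tfu^*) \preceq bA(q(Tu^*, Tfu^*))$, and Remark \ref{1.10} applied to the positive operator $bA$ with $r(bA) < 1$ forces $q(Tu^*, Tfu^*) = \theta$; the $b$-triangle inequality then propagates this to $q(Tx_n, Tfu^*) \preceq b\, q(Tx_n, Tu^*) \to \theta$ on those indices. Hence $q(Tx_n, Tfu^*) \to \theta$ on the full sequence, and Lemma \ref{lemma:cdistconv}\ref{ilemma:cdistconv} combined with $q(Tx_n, Tu^*) \to \theta$ gives $Tu^* = Tfu^*$; injectivity of $T$ yields $fu^* = u^*$.

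For the moreover clause, apply (1.1) to $(u^*, u^*)$: since $fu^* = u^*$ every element of $U_T(u^*, u^*)$ equals $q(Tu^*, Tu^*)$, so $q(Tu^*, Tu^*) \preceq A(q(Tu^*, Tu^*))$ and Remark \ref{1.10} gives $q(Tu^*, Tu^*) = \theta$. For uniqueness, suppose $u^{**}$ is another fixed point; repeating the moreover argument at $u^{**}$ gives $q(Tu^{**}, Tu^{**}) = \theta$, and applying (1.1) to $(u^*, u^{**})$, the admissible values for $U_T$ are $q(Tu^*, Tu^{**})$, $\theta$, or $\theta$. In each case Remark \ref{1.10} forces $q(Tu^*, Tu^{**}) = \theta$, so Lemma \ref{lemma:cdistconv}\ref{ilemma:cdistconv} together with $q(Tu^*, Tu^*) = \theta$ gives $Tu^* = Tu^{**}$, hence $u^* = u^{**}$ by injectivity of $T$. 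The main technical obstacle is precisely the three-way case-split for $fu^* = u^*$; once those branches are handled, the Cauchy estimate and the moreover/uniqueness arguments are routine applications of Remark \ref{1.10} and Lemma \ref{lemma:cdistconv}.
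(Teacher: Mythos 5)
Your overall architecture matches the paper's: the same Picard iteration, the same telescoped bound $q(Tx_n,Tx_m)\preceq bA^n(I-bA)^{-1}(q(Tx_0,Tx_1))$, the same use of sequential convergence and continuity of $T$ to produce $u^*$ with $Tx_n\to Tu^*$, the same three-way case analysis on $U_T(\cdot\,,u^*)$, and identical ``moreover'' and uniqueness arguments via Remark \ref{1.10} and Lemma \ref{lemma:cdistconv}\ref{ilemma:cdistconv}. Everything up to and including the first two branches of the case analysis, and everything after it, is sound (and your insertion of the factor $b$ in $q(Tx_n,Tu^*)\preceq ba_n$ from \ref{q3:defgencdist} is actually more careful than the paper's).

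The gap is in your third branch. You invoke property \ref{q3:defgencdist} on the bound $q(Tx_{n_k+1},Tfu^*)\preceq A(q(Tu^*,Tfu^*))$ together with $Tx_{n_k+1}\to Tu^*$ to conclude $q(Tu^*,Tfu^*)\preceq bA(q(Tu^*,Tfu^*))$. But \ref{q3:defgencdist} is a lower-semicontinuity property in the \emph{second} variable only: from $q(u,v_n)\preceq e$ and $v_n\to v$ it gives $q(u,v)\preceq be$. In your application the convergent sequence occupies the \emph{first} slot while the second argument $Tfu^*$ is fixed, and Definition \ref{de1.4} guarantees no continuity of $q$ in its first variable (recall $q$ is not symmetric and $q(x,y)=\theta$ need not imply $x=y$), so the inequality $q(Tu^*,Tfu^*)\preceq bA(q(Tu^*,Tfu^*))$ does not follow; without it the whole branch, and hence $Tfu^*=Tu^*$, is unproved. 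For comparison, the paper treats this branch by inserting \ref{q2:defgencdist} in the form $q(Tu^*,Tfu^*)\preceq b[q(Tu^*,Tx_n)+q(Tx_n,Tfu^*)]$ into the contraction inequality and rearranging to $q(Tx_n,Tfu^*)\preceq bA(I-bA)^{-1}(q(Tu^*,Tx_n))$ --- note that this route still requires a decay estimate for $q(Tu^*,Tx_n)$, with $Tu^*$ in the first slot, which is not among the quantities the Cauchy estimate controls. So you cannot repair your argument simply by substituting the paper's computation; you need either an honest bound on $q(Tu^*,Tx_n)$ or a genuinely different treatment of the branch $U_T=q(Tu^*,Tfu^*)$.
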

	\begin{proof}
		Let $x_{0}$ be any element in $X.$ Set $%
		x_{1}=fx_{0},$ $x_{2}=fx_{1}=f^{2}x_{0},..._{\cdot }$ Define a sequence $%
		\{x_{n}\}$ in $X$ by $x_{n+1}=f^{n}x_{0}=fx_{n}.$ As the mapping $f$ is a
		Perov $THR_{1}-$contraction, we have
		\begin{equation}\label{2.11}
			q(Tx_{n},Tx_{n+1})=q(Tfx_{n-1},Tfx_{n}))\leq A(U_{T}(x_{n-1},x_{n})),
		\end{equation}
		where
		\begin{eqnarray*}
			U_{T}(x_{n-1},x_{n}) &\in
			&\{q(Tx_{n-1},Tx_{n}),q(Tx_{n-1},Tfx_{n-1}),q(Tx_{n},Tfx_{n})\} \\
			&=&\{q(Tx_{n-1},Tx_{n}),q(Tx_{n},Tx_{n+1})\}.
		\end{eqnarray*}%
		If $U_{T}(x_{n-1},x_{n})=q(Tx_{n},Tx_{n+1}),$ then
		$q(Tx_{n},Tx_{n+1})\leq A(q(Tx_{n},Tx_{n+1}))$,
		which by the Remark \ref{1.10}
		implies that $q(Tx_{n},Tx_{n+1})=\theta.$
		Thus, for all $n\in \mathbb{N},$
		\begin{equation*}
			q(Tx_{n},Tx_{n+1})\leq A(q(Tx_{n-1},Tx_{n})).
		\end{equation*}%
		Continuing this way, we get
		$
		q(Tx_{n},Tx_{n+1})\leq A^{n}(q(Tx_{0},Tx_{1})).
		$
		For any $m,n\in\mathbb{N}$ with $m>n,$ it follows from $(q_{2})$ that
		\begin{align}\label{2.22}
			q(Tx_{n},Tx_{m})& \leq b[q(Tx_{n},Tx_{n+1})+q(Tx_{n+1},Tx_{m})]  \notag \\
			& \leq bq(Tx_{n},Tx_{n+1})+b[bq(Tx_{n+1},Tx_{n+2})+q(Tx_{n+2},Tx_{m})]
			\notag \\
			& \leq \cdot \cdot \cdot   \notag \\
			& \leq bq(Tx_{n},Tx_{n+1})+b^{2}q(Tx_{n+1},Tx_{n+2})+\cdots
			+b^{m-n}q(Tx_{m-1},Tx_{m})  \notag \\
			& \leq (bA^{n}+b^{2}A^{n+1}\cdots +b^{m-n}A^{m-1})(q(Tx_{0},Tx_{1}))  \notag
			\\
			& \leq bA^{n}\left( I-bA\right) ^{-1}(q(Tx_{0},Tx_{1})).
		\end{align}%
		Let $c\gg 0.$ Choose $\delta >0$ such that $c+N_{\delta }(\theta )\subseteq
		P\,,$ where $N_{\delta }(\theta )=\{x\in E:\left \Vert x\right \Vert <\delta
		\}.$ Also, choose $N_{1}\in\mathbb{N}$
		such that $bA^{n}(I-bA)^{-1}(q(Tx_{0},Tx_{1}))\in N_{\delta }(\theta )$
		for all $n>N_{1}$. Thus for all $m>n>N_{1},$ we have
		\begin{equation*}
			q(Tx_{n},Tx_{m})\preceq bA^{n}(I-bA)^{-1}(d(Tx_{0},Tx_{1}))\ll c
		\end{equation*}
		which implies that $\{Tx_{n}\}$ is a Cauchy sequence in $X$. Since $X$ is
		complete, there exists an element point $v\in X$ such that
		$Tx_{n}\rightarrow v$ as $n\rightarrow \infty $.
		
		Since $T$ is subsequently convergent, $\{x_{n}\}$ has a convergent
		subsequence. So, there is $x^{\ast }\in X$ and subsequence $\{x_{n_{i}}\}$
		such that $x_{n_{i}}\rightarrow x^{\ast }$ as $i\rightarrow \infty $. As $T$
		is continuous, we obtain $\lim \limits_{n_{i}\rightarrow \infty
		}Tx_{n_{i}}=Tx^{\ast }$. The uniqueness of the limit implies that $Tx^{\ast
		}=v$. By (q$_{\text{3}}$), we obtain
		\begin{equation}\label{e2.3}
			q(Tx_{n},Tx^{\ast })\leq bA^{n}(I-bA)^{-1}(q(Tx_{0},Tx_{1})).
		\end{equation}%
		Also%
		\begin{equation}\label{e2.4}
			\begin{array}{cc}
				q(Tx_{n},Tfx^{\ast }) & =q(Tfx_{n-1},Tfx^{\ast }) \\
				& \leq A(U_{T}(x_{n-1},x^{\ast })),%
			\end{array}
		\end{equation}%
		where
		\begin{eqnarray*}
			U_{T}(x_{n-1},x^{\ast }) &\in &\{q(Tx_{n-1},Tx^{\ast
			}),q(Tx_{n-1},Tfx_{n-1}),q(Tx^{\ast },Tfx^{\ast })\} \\
			&=&\{q(Tx_{n-1},Tx^{\ast }),q(Tx_{n-1},Tx_{n}),q(Tx^{\ast },Tfx^{\ast })\}.
		\end{eqnarray*}%
		Now, if $U_{T}(x_{n-1},x^{\ast })=q(Tx_{n-1},Tx^{\ast }),$ then we obtain%
		\begin{eqnarray*}
			q(Tx_{n},Tfx^{\ast }) &\leq &A\left( q(Tx_{n-1},Tx^{\ast })\right)  \\
			&\leq &bA^{n-1}(I-bA)^{-1}(q(Tx_{0},Tx_{1})).
		\end{eqnarray*}%
		Whenever $U_{T}(x_{n-1},x^{\ast })=q(Tx_{n-1},Tx_{n}),$ then we have%
		\begin{eqnarray*}
			q(Tx_{n},Tfx^{\ast }) &\leq &A\left( q(Tx_{n-1},Tx_{n})\right)  \\
			&\leq &bA^{n}(I-bA)^{-1}(q(Tx_{0},Tx_{1})).
		\end{eqnarray*}%
		Finally if $U_{T}(x_{n-1},x^{\ast })=q(Tx^{\ast },Tfx^{\ast }),$ then we get
		that%
		\begin{eqnarray*}
			q(Tx_{n},Tfx^{\ast }) &\leq &A\left( q(Tx^{\ast },Tfx^{\ast })\right)  \\
			&\leq &bA\left( q(Tx^{\ast },Tx_{n})\right) +bA\left( q(Tx_{n},Tfx^{\ast
			})\right) ,
		\end{eqnarray*}%
		that is,%
		\begin{eqnarray*}
			q(Tx_{n},Tfx^{\ast }) &\leq &bA\left( I-bA\right) ^{-1}\left( q(Tx^{\ast
			},Tx_{n})\right)  \\
			&\leq &bA^{n}\left( I-bA\right) ^{-1}\left( q(Tx_{0},Tx_{1})\right) .
		\end{eqnarray*}
		Then by Lemma \ref{lemma:cdistconv} \ref{ilemma:cdistconv}, \eqref{e2.3} and \eqref{e2.4},
		we have $Tx^{\ast}=Tfx^{\ast }$. As the map $T$ is one to one, so $x^{\ast }=fx^{\ast },$
		that is, $x^{\ast }$ is the fixed point of $f.$
		
		Now by taking $x^{\ast }=fx^{\ast },$ we have
		\begin{equation}\label{2.5}
			q(Tx^{\ast },Tx^{\ast })  =q(Tfx^{\ast },Tfx^{\ast })
			\leq A(U_{T}(x^{\ast },x^{\ast })),
		\end{equation}
		where
		\begin{eqnarray*}
			U_{T}(x^{\ast },x^{\ast }) &\in &\{q(Tx^{\ast },Tx^{\ast }),q(Tx^{\ast
			},Tfx^{\ast }),q(Tx^{\ast },Tfx^{\ast })\} \\
			&=&\{q(Tx^{\ast },Tx^{\ast }),q(Tx^{\ast },Tx^{\ast }),q(Tx^{\ast },Tx^{\ast})\} \\
			&=&\{q(Tx^{\ast },Tx^{\ast })\}.
		\end{eqnarray*}
		Thus, $q(Tx^{\ast },Tx^{\ast })\leq A(q(Tx^{\ast },Tx^{\ast })).$
		By Remark \ref{1.10}, we get $q(Tx^{\ast },Tx^{\ast })=\theta .$
		
		To prove the uniqueness of fixed point, suppose that there exists
		another point $y^{\ast }$ in $X$ such that $y^{\ast }=fy^{\ast }$. Then we
		have
		\begin{equation}\label{2.6}
			q(Tx^{\ast },Ty^{\ast })  =q(Tfx^{\ast },Tfy^{\ast })
			\leq A(U_{T}(x^{\ast },y^{\ast })),
		\end{equation}
		where
		\begin{eqnarray*}
			U_{T}(x^{\ast },y^{\ast }) &\in &\{q(Tx^{\ast },Ty^{\ast }),q(Tx^{\ast
			},Tfx^{\ast }),q(Ty^{\ast },Tfy^{\ast })\} \\
			&=&\{q(Tx^{\ast },Ty^{\ast }),q(Tx^{\ast },Tx^{\ast }),q(Ty^{\ast },Ty^{\ast
			})\} \\
			&=&\{q(Tx^{\ast },Ty^{\ast }),\theta \}.
		\end{eqnarray*}%
		Now if $U_{T}(x^{\ast },y^{\ast })=q(Tx^{\ast },Ty^{\ast })$, then we have%
		\begin{equation*}
			q(Tx^{\ast },Ty^{\ast })\leq A(q(Tx^{\ast },Ty^{\ast })),
		\end{equation*}%
		which by the Remark \ref{1.10} implies $q(Tx^{\ast },Ty^{\ast })=\theta $.
		Thus, by Lemma \ref{lemma:cdistconv} \ref{ilemma:cdistconv}, we obtain $Tx^{\ast }=Ty^{\ast }.$
		
		In case $U_{T}(x^{\ast },y^{\ast })=\theta $, we have
		$q(Tx^{\ast },Ty^{\ast })\leq A(\theta )=\theta,$
		that is, $q(Tx^{\ast },Ty^{\ast })=\theta $. Again, by the Lemma \ref{lemma:cdistconv} \ref{ilemma:cdistconv}, it follows that $Tx^{\ast }=Ty^{\ast }.$
		Since $T$ is one to one, so $x^{\ast }=y^{\ast }$. Thus $f$ has a
		unique fixed point.
	\end{proof}
	The following non trivial example shows the validity of Theorem \ref{2.1}.
	
	\begin{example}
		Let $X=[0,1],$ $E=\mathbb{R}^{2}\ $and $P=\{ \left( x,y\right) \in E:x,y\geq 0\} \subseteq E.$
		Define $d:X\times X\rightarrow E$ as given in Example \ref{ex1.2}. Then $(X,d)$ is a
		complete cone $b$-metric space with $b\geq 2^{p-1}\geq 1$ along with $P$ a
		solid cone.
		
		Define $q\left( x,y\right) =\left( \psi \left( y\right) ,\eta \psi
		\left( y\right) \right) ,$ where $\eta \geq 1$ and $\psi :X\rightarrow
		\mathbb{R}_{+}$ is of the form $\psi \left( t\right) =at^{p}$ for some $p>1,$ where $%
		a\in
		\mathbb{R}_{+}.$ Then $q$ is a generalized $c$-distance on $X$ as shown in Example \ref{ex1.5}.
		
		Define a selfmap $T:X\rightarrow X$ by $T\left( x\right) =\lambda
		x $ for some $\lambda \in (0,1].$ Clearly, $T$ is a one to one, continuous
		and sequentially convergent map.
		
		Define $f:X\rightarrow X$ by%
		\begin{equation*}
			f(x)=\left \{
			\begin{array}{ll}
				\beta x^{2}\text{\ } & \text{if }x\neq 1\text{,} \\
				&  \\
				\beta x\  & \text{if }x=1,%
			\end{array}%
			\right.
		\end{equation*}%
		where $\beta \in \left( 0,\gamma \right) .$ A linear bounded operator $%
		A:E\rightarrow E$ is\ taken as $A=\left[
		\begin{array}{cc}
			a_{11} & a_{12} \\
			a_{21} & a_{22}%
		\end{array}%
		\right] $ with $a_{ij}\geq 0$ for $i,j\in \left \{ 1,2\right \} $. We take $%
		\beta ^{p}\leq a_{11}+\eta a_{12}$ and $\eta \beta ^{p}\leq a_{21}+\eta
		a_{22}.$
		
		Set $\left \Vert \mathbf{x}\right \Vert =\max \{ \left \vert x_{1}\right
		\vert ,\left \vert x_{2}\right \vert \}$ for $\mathbf{x}=\left[
		\begin{array}{c}
			x_{1} \\
			x_{2}%
		\end{array}%
		\right] ,$ $x_{i}\in
		\mathbb{R},$ $i=1,2,$ and
		$$\left \Vert A\mathbf{x}\right \Vert =\max \{ \left \vert
		a_{11}x_{1}+a_{12}x_{2}\right \vert ,\left \vert
		a_{21}x_{1}+a_{22}x_{2}\right \vert \}.$$
		Then,
		\begin{align*}
			\left \Vert A\right \Vert &=\sup \limits_{\left \Vert \mathbf{x}\right
				\Vert =1}\left \Vert A\mathbf{x}\right \Vert =\sup \limits_{\left \Vert
				\mathbf{x}\right \Vert =1}\left[ \max \{ \left \vert
			a_{11}x_{1}+a_{12}x_{2}\right \vert ,\left \vert
			a_{21}x_{1}+a_{22}x_{2}\right \vert \} \right] \\
			& =\max \{ \left \vert a_{11}+a_{12}\right \vert ,\left \vert a_{21}+a_{22}\right \vert \}.
		\end{align*}
		Now we are to show that
		\begin{equation*}
			q(Tf\left( x_{1}\right) ,Tf\left( x_{2}\right) )\leq A(U_{T}(x_{1},x_{2})),
		\end{equation*}%
		holds, where%
		\begin{equation*}
			U_{T}(x_{1},x_{2})\in \{q(T\left( x_{1}\right) ,T\left( x_{2}\right)
			),q(T\left( x_{1}\right) ,Tf\left( x_{1}\right) ),q(T\left( x_{2}\right)
			,Tf\left( x_{2}\right) )\}.
		\end{equation*}%
		Now for all $x,y\in X$ with $y\neq 1$, we obtain that%
		\begin{align*}
			q(Tf\left( x\right) ,Tf\left( y\right) )& =\left[
			\begin{array}{c}
				\psi \left( Tf\left( y\right) \right) \\
				\eta \psi \left( Tf\left( y\right) \right)%
			\end{array}%
			\right] ^{T}=\left[
			\begin{array}{c}
				a\left( Tf\left( y\right) \right) ^{p} \\
				\eta a\left( Tf\left( y\right) \right) ^{p}%
			\end{array}%
			\right] ^{T} \\
			& =\left[
			\begin{array}{c}
				a\left( \lambda \beta y^{2}\right) ^{p} \\
				\eta a\left( \lambda \beta y^{2}\right) ^{p}%
			\end{array}%
			\right] ^{T}\leq \left[
			\begin{array}{c}
				(a_{11}+\eta a_{12})\left[ a\left( \lambda y\right) ^{p}\right] \\
				(a_{21}+\eta a_{22})\left[ a\left( \lambda y\right) ^{p}\right]%
			\end{array}%
			\right] ^{T} \\
			& =\left[
			\begin{array}{cc}
				a_{11} & a_{12} \\
				a_{21} & a_{22}%
			\end{array}%
			\right] \left[
			\begin{array}{c}
				a\left( \lambda y\right) ^{p} \\
				\eta a\left( \lambda y\right) ^{p}%
			\end{array}%
			\right] ^{T} \\
			& =\left[
			\begin{array}{cc}
				a_{11} & a_{12} \\
				a_{21} & a_{22}%
			\end{array}%
			\right] \left[
			\begin{array}{c}
				\psi \left( T\left( y\right) \right) \\
				\eta \psi \left( T\left( y\right) \right)%
			\end{array}%
			\right] ^{T}=A(q(T\left( x\right) ,T\left( y\right) )),
		\end{align*}%
		where%
		\begin{equation*}
			q\left( T\left( x\right) ,T\left( y\right) \right) \in U_{T}\left(
			x,y\right) =\{q\left( T\left( x\right) ,T\left( y\right) \right) ,q(T\left(
			x\right) ,Tf\left( x\right) ),q(T\left( y\right) ,Tf\left( y\right) )\}.
		\end{equation*}%
		If $y=1$, then we have%
		\begin{align*}
			q(Tf\left( x\right) ,Tf\left( y\right) )& =\left[
			\begin{array}{c}
				\psi \left( Tf\left( y\right) \right) \\
				\eta \psi \left( Tf\left( y\right) \right)%
			\end{array}%
			\right] ^{T}=\left[
			\begin{array}{c}
				a\left( Tf\left( y\right) \right) ^{p} \\
				\eta a\left( Tf\left( y\right) \right) ^{p}%
			\end{array}%
			\right] ^{T} \\
			& =\left[
			\begin{array}{c}
				a\left( \lambda \beta \right) ^{p} \\
				\eta a\left( \lambda \beta \right) ^{p}%
			\end{array}%
			\right] ^{T}\leq \left[
			\begin{array}{c}
				(a_{11}+\eta a_{12})\left[ a\left( \lambda \right) ^{p}\right] \\
				(a_{21}+\eta a_{22})\left[ a\left( \lambda \right) ^{p}\right]%
			\end{array}%
			\right] ^{T} \\
			& =\left[
			\begin{array}{cc}
				a_{11} & a_{12} \\
				a_{21} & a_{22}%
			\end{array}%
			\right] \left[
			\begin{array}{c}
				a\left( \lambda \right) ^{p} \\
				\eta a\left( \lambda \right) ^{p}%
			\end{array}%
			\right] ^{T}=\left[
			\begin{array}{cc}
				a_{11} & a_{12} \\
				a_{21} & a_{22}%
			\end{array}%
			\right] \left[
			\begin{array}{c}
				\psi \left( T\left( y\right) \right) \\
				\eta \psi \left( T\left( y\right) \right)%
			\end{array}%
			\right] ^{T} \\
			& =A(q(T\left( x\right) ,T\left( y\right) )),
		\end{align*}%
		where%
		\begin{equation*}
			q\left( T\left( x\right) ,T\left( y\right) \right) \in U_{T}\left(
			x,y\right) =\{q\left( T\left( x\right) ,T\left( y\right) \right) ,q(T\left(
			x\right) ,Tf\left( x\right) ),q(T\left( y\right) ,Tf\left( y\right) )\}.
		\end{equation*}%
		So, the mapping $f:X\rightarrow X$ is a Perov $THR_{1}-$contraction. All
		conditions of Theorem\ref{2.1} are satisfied. Moreover, $u^{\ast }=0$ is a
		fixed point of $f$ and $q(u^{\ast },u^{\ast })=0$.
	\end{example}
	The following result is the direct consequence of Theorem \ref{2.1}.
	
	\begin{corollary}\label{2.33}
		Let $(X,d)$ be a complete $b$-cone
		metric space, $q$ a generalized $c$-distance on $X$ and $P$ a solid cone. If
		there exists\ a linear bounded operator $A:E\rightarrow E$ with $r(bA)<1$
		and $A\left( P\right) \subset P$ such that an inequalities%
		\begin{equation}\label{2.7}
			q(fx_{1},fx_{2})\leq A(U(x_{1},x_{2}))
		\end{equation}%
		hold for all $x_{1},x_{2}\in X$, where
		$
		U(x_{1},x_{2})\in \{q(x_{1},x_{2}),q(x_{1},fx_{1}),q(x_{2},fx_{2})\}.
		$
		Then $f$ has a unique fixed point $u^{\ast }\in X.$ Moreover, for any $%
		x_{0}\in X,$ iterative sequence $\{f^{n}x_{0}\}$ converges to the fixed
		point $u^{\ast }\in X$ and $q(u^{\ast },u^{\ast })=\theta $.
	\end{corollary}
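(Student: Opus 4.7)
The plan is to apply Theorem \ref{2.1} directly, taking $T$ to be the identity mapping $I_X : X \to X$. I first need to check that $I_X$ satisfies the three hypotheses required of $T$ in Theorem \ref{2.1}: one-to-oneness, continuity, and sequential convergence. The identity is trivially injective. Continuity in the sense of Definition \ref{1.12}(a) is immediate, since if $u_n \to u^*$ then $I_X u_n = u_n \to u^* = I_X u^*$. Sequential convergence in the sense of Definition \ref{1.12}(b) is also automatic: if $\{I_X u_n\} = \{u_n\}$ converges, then of course $\{u_n\}$ converges.

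Next I would observe that, upon substituting $T = I_X$ in the definition of a Perov $THR_1$-contraction, the condition
\begin{equation*}
q(Tfx_1, Tfx_2) \leq A(U_T(x_1, x_2))
\end{equation*}
reduces verbatim to
\begin{equation*}
q(fx_1, fx_2) \leq A(U(x_1, x_2)),
\end{equation*}
with $U(x_1, x_2) \in \{q(x_1, x_2), q(x_1, fx_1), q(x_2, fx_2)\}$. This is precisely the contraction hypothesis \eqref{2.7} assumed in the corollary. The spectral radius condition $r(bA) < 1$ and the positivity $A(P) \subset P$ match the requirements of Theorem \ref{2.1} exactly.

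Finally I would invoke Theorem \ref{2.1} to conclude that $f$ has a fixed point $u^* \in X$, that $\{f^n x_0\}$ converges to $u^*$ for every $x_0 \in X$, and that $q(T u^*, T u^*) = q(u^*, u^*) = \theta$. Uniqueness and the Picard-iteration convergence statement likewise transfer directly from Theorem \ref{2.1}. There is no real obstacle here, as the corollary is genuinely a specialization; the only minor point to verify carefully is that the identity mapping satisfies Definition \ref{1.12}, which is routine.
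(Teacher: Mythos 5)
Your proposal is correct and follows exactly the paper's route: the paper's proof of this corollary is the one-line observation that taking $T$ to be the identity map in Theorem \ref{2.1} yields the result. Your additional verification that the identity map is one-to-one, continuous, and sequentially convergent in the sense of Definition \ref{1.12} is routine but sound, and fills in the details the paper leaves implicit.
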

	\begin{proof}
		Taking $T$ as an identity map in the
		Theorem \ref{2.1}, the result follows.
	\end{proof}
	\begin{example}
		\label{2.44}  \ Let $X=[0,1],$ $E=X\times X\ $and $P=\{
		\left( x,y\right) \in E:x,y\geq 0\}=E.$ Define $d:X\times X\rightarrow E$ by
		$d(x,y)=\left( |x-y|^{p},\alpha |x-y|^{p}\right) $, where $\alpha \geq 1$
		and $p>1.\ $Then $(X,d)$ is a cone $b$-metric space. Let $q:X\times
		X\rightarrow E$ be given by
		\begin{equation*}
			q(x,y)=\left( y^{p},\alpha y^{p}\right) .
		\end{equation*}%
		Then $q$ is a generalized $c$-distance. We fix $p=2.$ Define $f:X\rightarrow
		X$ by%
		\begin{equation*}
			f(x)=\left \{
			\begin{array}{ll}
				\dfrac{x^{2}}{10}\text{\ } & \text{if }x\neq 1\text{,} \\
				&  \\
				\dfrac{x}{10}\  & \text{if }x=1.%
			\end{array}%
			\right.
		\end{equation*}%
		Define a linear bounded operator $A:E\rightarrow E$ by $A=\left[
		\begin{array}{cc}
			\frac{2}{10} & \frac{1}{10} \\
			\frac{3}{10} & \frac{1}{10}%
		\end{array}%
		\right] $. Set $\left \Vert \mathbf{x}\right \Vert =\max \{ \left \vert
		x_{1}\right \vert ,\left \vert x_{2}\right \vert \}$ for $\mathbf{x}=\left[
		\begin{array}{c}
			x_{1} \\
			x_{2}%
		\end{array}%
		\right] ,$ $x_{i}\in
		\mathbb{R}
		,$ $i=1,2.$ For an arbitrary $\mathbf{x}\in E,$ we have%
		\begin{eqnarray*}
			\left \Vert A\mathbf{x}\right \Vert &=&\max \left \{ \left \vert \frac{2}{10}%
			x_{1}+\frac{1}{10}x_{2}\right \vert ,\left \vert \frac{3}{10}x_{1}+\frac{1}{%
				10}x_{2}\right \vert \right \} \\
			&\leq &\max \left \{ \frac{3}{10}\max \left \{ \left \vert x_{1}\right \vert
			,\left \vert x_{2}\right \vert \right \} ,\frac{4}{10}\left \vert
			x_{2}\right \vert \right \} =\frac{3}{10}\left \Vert \mathbf{x}\right \Vert .
		\end{eqnarray*}%
		Thus, $\left \Vert A\right \Vert \leq \frac{3}{10}.$ If $\mathbf{x}=\left[
		\begin{array}{c}
			1 \\
			1%
		\end{array}%
		\right] ,$ then $\left \Vert \mathbf{x}\right \Vert =1$ and
		$$\left \Vert A \mathbf{x}\right \Vert =\max \{ \left \vert \frac{2}{10}+\frac{1}{10}%
		\right \vert ,\left \vert \frac{3}{10}+\frac{1}{10}\right \vert \}=\max \{
		\frac{3}{10},\frac{4}{10}\}=\frac{3}{10}.$$ Hence $\left \Vert A\right \Vert =%
		\frac{3}{10}=0.3.$ Clearly $\left \Vert bA\right \Vert =\left \Vert 2A\right
		\Vert =0.6<1$ and $A\left( P\right) \subset P.$
		
		Now for all $x,y\in X$ with $y\neq 1$, we obtain that
		\begin{align*}
			q(fx,fy)& =\left[
			\begin{array}{c}
				\frac{y^{4}}{100} \\
				\frac{\alpha y^{4}}{100}
			\end{array}
			\right] ^{T}
			\leq \left[
			\begin{array}{c}
				(\frac{2}{10}+\frac{\alpha }{10})y^{2} \\
				(\frac{3}{10}+\frac{\alpha }{10})y^{2}%
			\end{array}%
			\right] ^{T} \\
			& =\left[
			\begin{array}{cc}
				\frac{2}{10} & \frac{1}{10} \\
				\frac{3}{10} & \frac{1}{10}%
			\end{array}%
			\right] \left[
			\begin{array}{c}
				y^{2} \\
				\alpha y^{2}%
			\end{array}%
			\right] ^{T}=A(q(x,y)),
		\end{align*}%
		where
		$
		q\left( x,y\right) \in U\left( x,y\right) =\{q\left( x,y\right)
		,q(x,fx),q(y,fy)\}.
		$
		If $y=1$, then
		\begin{align*}
			q(fx,fy)& =\left[
			\begin{array}{c}
				\frac{y^{2}}{100} \\
				\frac{\alpha y^{2}}{100}
			\end{array}
			\right] ^{T}  \leq \left[
			\begin{array}{c}
				(\frac{2}{10}+\frac{\alpha }{10})y^{2} \\
				(\frac{3}{10}+\frac{\alpha }{10})y^{2}
			\end{array}
			\right] ^{T} \\
			& =\left[
			\begin{array}{cc}
				\frac{2}{10} & \frac{1}{10} \\
				\frac{3}{10} & \frac{1}{10}
			\end{array}
			\right] \left[
			\begin{array}{c}
				y^{2} \\
				\alpha y^{2}
			\end{array}
			\right] ^{T}=A(q(x,y)),
		\end{align*}
		where
		$$q\left( x,y\right) \in U\left( x,y\right) =\{q\left( x,y\right),q(x,fx),q(y,fy)\}.$$
		Hence, all the conditions of Corollary \ref{2.33} are satisfied. Moreover, $u^{\ast
		}=0$ is a fixed point of $f$ and $q(u^{\ast },u^{\ast })=0$.
	\end{example}
	
	The following result is the direct consequence of Theorem \ref{2.1}
	which extends and generalizes \cite[Theorem 1]{FA13}.
	\begin{corollary}
		\label{c2.5} Let $(X,d)$ be a complete cone metric
		space, $q$ a generalized $c$-distance on $X$ and $P$ a solid cone. Suppose
		that $T:X\rightarrow X$ is a one to one, continuous function and
		sequentially convergent and $f:X\rightarrow X$ . If there exists\ a linear
		bounded operator $A:E\rightarrow E$ with $r(bA)<1$ and $A\left( P\right)
		\subset P$ such that%
		\begin{equation}\label{e2.8}
			q(Tfx_{1},Tfx_{2})\leq A(q(x_{1},x_{2})),
		\end{equation}%
		hold for all $x_{1},x_{2}\in X$, then $f$ has a unique fixed point $u^{\ast
		}\in X.$ Moreover, for any $x_{0}\in X,$ the iterative sequence $%
		\{f^{n}x_{0}\}$ converges to the fixed point $u^{\ast }\in X$ and $%
		q(Tu^{\ast },Tu^{\ast })=\theta $.
	\end{corollary}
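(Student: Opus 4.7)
The plan is to mirror the proof of Theorem \ref{2.1} with the one-term contraction \eqref{e2.8} playing the role of the three-branch $U_{T}$-inequality (1.1). The ambient data — complete cone metric space, solid cone $P$, generalized $c$-distance $q$, one-to-one continuous and sequentially convergent $T$, positive bounded linear operator $A$ with $r(bA)<1$ — is identical to that of Theorem \ref{2.1}, so the entire skeleton (Picard iteration, Cauchy telescoping, subsequential limit identification, uniqueness, and the $q(Tu^{*},Tu^{*})=\theta$ step) can be reused, with only the per-step contraction estimate adapted to the simpler right-hand side $A(q(x_{1},x_{2}))$.

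Concretely, I would fix $x_{0}\in X$, set $x_{n+1}=fx_{n}$, apply \eqref{e2.8} to $(x_{n-1},x_{n})$ to obtain $q(Tx_{n},Tx_{n+1})\preceq A(q(x_{n-1},x_{n}))$, and iterate to $q(Tx_{n},Tx_{n+1})\preceq A^{n}(q(x_{0},x_{1}))$. The (q$_{2}$)-telescoping used for \eqref{2.22} then produces the two-index bound $q(Tx_{n},Tx_{m})\preceq bA^{n}(I-bA)^{-1}(q(x_{0},x_{1}))$ for $m>n$, with $(I-bA)^{-1}$ positive by Remark \ref{1.9} since $r(bA)<1$. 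Hence $\{Tx_{n}\}$ is Cauchy; completeness of $X$ together with continuity and sequential convergence of $T$ yields $x^{*}\in X$ along a subsequence $\{x_{n_{i}}\}$ with $Tx^{*}=\lim_{n}Tx_{n}$.

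To show $fx^{*}=x^{*}$, I would pair the (q$_{3}$)-based bound $q(Tx_{n},Tx^{*})\preceq bA^{n}(I-bA)^{-1}(q(x_{0},x_{1}))$ with a companion bound obtained by applying \eqref{e2.8} to $(x_{n-1},x^{*})$, namely $q(Tx_{n},Tfx^{*})\preceq A(q(x_{n-1},x^{*}))$, and then invoke Lemma \ref{lemma:cdistconv}\ref{ilemma:cdistconv} to conclude $Tx^{*}=Tfx^{*}$; injectivity of $T$ then gives $x^{*}=fx^{*}$. The main obstacle is controlling $A(q(x_{n-1},x^{*}))$: because \eqref{e2.8} does not wrap $T$ around $x_{1},x_{2}$ on the right, the Cauchy bound for $\{Tx_{n}\}$ does not apply directly to $q(x_{n-1},x^{*})$. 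I would overcome this by running an analogous telescoping on the un-$T$-ed iterates — iterating \eqref{e2.8} on $(x_{n-2},x^{*})$ and using $x_{n-1}=fx_{n-2}$ reduces $q(Tx_{n-1},Tfx^{*})$ to $A(q(x_{n-2},x^{*}))$, and unwinding this inductively produces an $A^{k}$-dominated tail that lies in any interior neighbourhood of $\theta$ once $k$ is large, by $r(bA)<1$.

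For the last two assertions, taking $x_{1}=x_{2}=u^{*}=fu^{*}$ in \eqref{e2.8} yields $q(Tu^{*},Tu^{*})\preceq A(q(u^{*},u^{*}))$, and Remark \ref{1.10} then forces $q(Tu^{*},Tu^{*})=\theta$. For uniqueness, if $y^{*}=fy^{*}$ as well, \eqref{e2.8} gives $q(Tx^{*},Ty^{*})\preceq A(q(x^{*},y^{*}))$ while the preceding argument applied to $y^{*}$ yields $q(Ty^{*},Ty^{*})=\theta$; Lemma \ref{lemma:cdistconv}\ref{ilemma:cdistconv} applied to these two estimates with base point $Ty^{*}$ then forces $Tx^{*}=Ty^{*}$, and injectivity of $T$ concludes $x^{*}=y^{*}$.
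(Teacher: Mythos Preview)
Your approach diverges from the paper's, which derives Corollary~\ref{c2.5} as an immediate specialization of Theorem~\ref{2.1}: the condition~\eqref{e2.8} is meant to be the Perov $THR_{1}$-contraction with the single choice $U_{T}(x_{1},x_{2})=q(Tx_{1},Tx_{2})$, so Theorem~\ref{2.1} applies verbatim and no separate argument is needed. (Note that this reading requires the right-hand side of~\eqref{e2.8} to be $A(q(Tx_{1},Tx_{2}))$ rather than $A(q(x_{1},x_{2}))$; the printed version appears to be a misprint, since otherwise the ``direct consequence'' claim would not go through.)

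Your direct argument, by contrast, takes~\eqref{e2.8} literally with $q(x_{1},x_{2})$ on the right, and this creates a genuine gap that your sketch does not close. The very first iteration step already fails: from $q(Tx_{n},Tx_{n+1})\preceq A(q(x_{n-1},x_{n}))$ you claim $q(Tx_{n},Tx_{n+1})\preceq A^{n}(q(x_{0},x_{1}))$, but to chain you would need $q(x_{n-1},x_{n})\preceq A(q(x_{n-2},x_{n-1}))$, whereas the hypothesis only controls $q(Tfx,Tfy)$, not $q(fx,fy)$. The same mismatch recurs in your fixed-point step (you bound $q(Tx_{n},Tfx^{\ast})$ by $A(q(x_{n-1},x^{\ast}))$ and then have no handle on $q(x_{n-1},x^{\ast})$; your ``inductive unwinding'' just reproduces the problem one index down), in your application of Remark~\ref{1.10} (you obtain $q(Tu^{\ast},Tu^{\ast})\preceq A(q(u^{\ast},u^{\ast}))$, which is \emph{not} of the form $u\preceq A(u)$), and in the uniqueness step. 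All of these are symptoms of the same structural issue: with $T$ on the left of~\eqref{e2.8} but absent on the right, the inequality cannot be composed with itself. The fix is not an extra estimate but the correction of~\eqref{e2.8} to $A(q(Tx_{1},Tx_{2}))$, after which the result is literally a special case of Theorem~\ref{2.1}.
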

	\begin{theorem}
		\label{t2.6} Let $(X,d)$ be a complete cone $b$%
		-metric space, $q$ a generalized $c$-distance on $X\ $and $P$ a solid cone.
		Suppose that $T:X\rightarrow X$ is a one to one, continuous function and
		sequentially convergent. If the mapping $f:X\rightarrow X$ is a Perov $%
		THR_{2}-$contraction, then $f$ has a unique fixed point $u^{\ast }\in X.$
		Also, for any $x_{0}\in X,$ the iterative sequence $\{f^{n}x_{0}\}$
		converges to the fixed point of $f$. Moreover $q(Tu^{\ast },Tu^{\ast
		})=\theta $ provided that $u^{\ast }=fu^{\ast }.$
	\end{theorem}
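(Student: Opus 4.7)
The plan is to imitate the proof of Theorem \ref{2.1}, with the composite operator $B := (I-A_3)^{-1}(A_1+A_2)$ playing the role that $A$ played there. By hypothesis $r(bB)<1$, and $B$ is positive since each $A_k(P)\subset P$ and $(I-A_3)^{-1}$ is positive (this follows from Remark \ref{1.9}, with $r(A_3)<1$ implicit in the spectral assumption). Fix $x_0\in X$ and set $x_n := f^n x_0$. Applying the $THR_2$-contraction to $(x_{n-1},x_n)$ gives
$$q(Tx_n,Tx_{n+1}) \leq A_1 q(Tx_{n-1},Tx_n) + A_2 q(Tx_{n-1},Tx_n) + A_3 q(Tx_n,Tx_{n+1}),$$
which rearranges to $(I-A_3)q(Tx_n,Tx_{n+1}) \leq (A_1+A_2)q(Tx_{n-1},Tx_n)$; applying $(I-A_3)^{-1}$ yields $q(Tx_n,Tx_{n+1}) \leq B\,q(Tx_{n-1},Tx_n)$ and inductively $q(Tx_n,Tx_{n+1}) \leq B^n q(Tx_0,Tx_1)$. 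The telescoping argument of \eqref{2.22} then produces $q(Tx_n,Tx_m) \leq bB^n(I-bB)^{-1}q(Tx_0,Tx_1)$ for $m>n$; since $r(bB)<1$ this vanishes, so $\{Tx_n\}$ is Cauchy by Lemma \ref{lemma:cdistconv}\ref{iiilemma:cdistconv}.

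Completeness gives $Tx_n\to v$; sequential convergence of $T$ extracts a subsequence $x_{n_i}\to x^*$, and continuity of $T$ forces $v=Tx^*$. Property $(q_3)$ upgrades the above to $q(Tx_n,Tx^*)\leq b^2 B^n(I-bB)^{-1}q(Tx_0,Tx_1)\to\theta$. To show $x^*=fx^*$, I apply the contraction to $(x_{n-1},x^*)$:
$$q(Tx_n,Tfx^*) \leq A_1 q(Tx_{n-1},Tx^*) + A_2 q(Tx_{n-1},Tx_n) + A_3 q(Tx^*,Tfx^*).$$
The first two summands are dominated by vanishing sequences, but the third is constant in $n$ and must be absorbed. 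Using $(q_2)$ to write $q(Tx^*,Tfx^*)\leq b[q(Tx^*,Tx_n)+q(Tx_n,Tfx^*)]$ and substituting, then collecting $q(Tx_n,Tfx^*)$ on the left and multiplying by $(I-bA_3)^{-1}$ (positive since $r(bA_3)\leq r(bB)<1$), yields a bound on $q(Tx_n,Tfx^*)$ that tends to $\theta$. Lemma \ref{lemma:cdistconv}\ref{ilemma:cdistconv}, combined with $q(Tx_n,Tx^*)\to\theta$, then forces $Tx^*=Tfx^*$, and injectivity of $T$ gives $x^*=fx^*$.

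The remaining conclusions follow by short variants of the same calculation. Evaluating $THR_2$ at $(x^*,x^*)$ and rearranging gives $q(Tx^*,Tx^*)\leq B\,q(Tx^*,Tx^*)$; since $r(B)<1$, Remark \ref{1.10} delivers $q(Tx^*,Tx^*)=\theta$. For uniqueness, if $y^*=fy^*$, then $THR_2$ at $(x^*,y^*)$ collapses the last two terms and leaves $q(Tx^*,Ty^*)\leq A_1 q(Tx^*,Ty^*)$. Since $A_1\leq A_1+A_2\leq B$ (as $(I-A_3)^{-1}\geq I$), monotonicity of the spectral radius on positive operators gives $r(A_1)\leq r(B)<1$, so Remark \ref{1.10} forces $q(Tx^*,Ty^*)=\theta$. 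Lemma \ref{lemma:cdistconv}\ref{ilemma:cdistconv}, together with $q(Tx^*,Tx^*)=\theta$, yields $Tx^*=Ty^*$ and hence $x^*=y^*$ by injectivity. The main obstacle throughout is the absorption step in the fixed-point verification, where the non-symmetric term $q(Tx^*,Tx_n)$ must be controlled — a subtlety handled here in direct analogy with the corresponding step of Theorem \ref{2.1}.
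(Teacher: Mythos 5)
Your overall architecture is the same as the paper's: iterate, derive $q(Tx_n,Tx_{n+1})\preceq B^n q(Tx_0,Tx_1)$ for a single composite operator, telescope via \ref{q2:defgencdist}, conclude Cauchyness, pass to the limit using sequential convergence and continuity of $T$, verify $Tx^*=Tfx^*$ by estimating $q(Tx_n,Tfx^*)$, and finish with Remark \ref{1.10} and Lemma \ref{lemma:cdistconv}\ref{ilemma:cdistconv}. In one respect you are more faithful to the hypotheses than the paper's own write-up: you keep $B=(I-A_3)^{-1}(A_1+A_2)$ — the operator whose spectral radius the definition of a Perov $THR_2$-contraction actually controls — throughout, whereas the printed proof silently switches to $A=A_1+A_2+A_3$ together with the condition $b(\Vert A_1\Vert+\Vert A_2\Vert+\Vert A_3\Vert)<1$, which appears nowhere in the hypotheses.

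There are, however, concrete problems. First, your absorption step invokes positivity of $(I-bA_3)^{-1}$ on the grounds that $r(bA_3)\leq r(bB)$; this inequality is false in general. Take $A_1=A_2=0$ and $A_3=\tfrac12 I$: then $B=0$, so $r(bB)=0<1$ and $(I-A_3)^{-1}=2I\in\mathcal{B}(E)$, yet $r(bA_3)=b/2\geq 1$ for $b\geq 2$, and $(I-bA_3)^{-1}$ is then a negative operator. Nothing in the hypotheses bounds $r(A_3)$ — only invertibility of $I-A_3$ is assumed — and the same unproved positivity of $(I-A_3)^{-1}$ is already needed (by you and by the paper) to pass from $(I-A_3)q(Tx_n,Tx_{n+1})\preceq (A_1+A_2)q(Tx_{n-1},Tx_n)$ to the recursion in $B$. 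Second, your absorption step, like the corresponding step in the proof of Theorem \ref{2.1} to which you defer, terminates in a bound involving $q(Tx^*,Tx_n)$, whereas only $q(Tx_n,Tx^*)$ has been estimated via \ref{q3:defgencdist}; since a generalized $c$-distance is explicitly non-symmetric, ``tends to $\theta$'' does not follow, so the ``main obstacle'' you identify is named but not actually overcome. Third, in the uniqueness step your derivation of $r(A_1)<1$ from $A_1\preceq B$ uses $(I-A_3)^{-1}\succeq I$ (again the unestablished positivity) and monotonicity of the spectral radius on the cone, which requires normality of $P$ — the theorem assumes only that $P$ is solid. The paper's own proof asserts $\Vert A_1\Vert<1$ with no more justification, so items two and three are gaps you share with the source; item one is a step of your own that is concretely false as stated.
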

	\begin{proof}
		Let $x_{0}$ be any point in $X.$ Define a
		sequence $\{x_{n}\}$ in $X$ by
		$x_{1}=fx_{0},$ $x_{2}=fx_{1}=f^{2}x_{0},$ $...,$ $x_{n+1}=fx_{n-1}=f^{n}x_{0}.$ Then for $n\geq 0,$ we have%
		\begin{eqnarray*}
			q(Tx_{n},Tx_{n+1}) &=&q(Tfx_{n-1},Tfx_{n}) \\
			&\leq &A_{1}(q(Tx_{n-1},Tx_{n}))+A_{2}(q(Tx_{n-1},Tfx_{n-1})) \\
			&&+A_{3}(q(Tx_{n},Tfx_{n})) \\
			&=&A_{1}(q(Tx_{n-1},Tx_{n}))+A_{2}(q(Tx_{n-1},Tx_{n})) \\
			&&+A_{3}(q(Tx_{n},Tx_{n+1})) \\
			&=&(A_{1}+A_{2})(q(Tx_{n-1},Tx_{n}))+A_{3}(q(Tx_{n},Tx_{n+1})),
		\end{eqnarray*}
		which implies that
		\begin{equation*}
			q(Tx_{n},Tx_{n+1})\leq (A_{1}+A_{2})(I-A_{3})^{-1}(q(Tx_{n-1},Tx_{n})).
		\end{equation*}
		Thus, for all $n\in \mathbb{N\cup \{}0\},$
		$$q(Tx_{n},Tx_{n+1})\leq A(q(Tx_{n-1},Tx_{n})),$$
		where $A=(A_{1}+A_{2})(I-A_{3})^{-1}$
		with $(b||A_{1}||+||A_{2}||)(I-||A_{3}||)^{-1}<1.$
		Continuing this way, we have for $n\in \mathbb{N\cup \{}0\},$
		\begin{equation*}
			q(Tx_{n},Tx_{n+1})\leq A^{n}(q(Tx_{0},Tx_{1})).
		\end{equation*}
		For $m,n\in
		\mathbb{N}
		\mathbb{\cup \{}0\}$ with $m>n,$ it follows from \ref{q2:defgencdist} that
		\begin{align*}
			q(Tx_{n},Tx_{m})& \leq b[q(Tx_{n},Tx_{n+1})+q(Tx_{n+1},Tx_{m})] \\
			& \leq bq(Tx_{n},Tx_{n+1})+b[bq(Tx_{n+1},Tx_{n+2})+q(Tx_{n+2},Tx_{m})] \\
			& \leq \cdot \cdot \cdot \\
			& \leq bq(Tx_{n},Tx_{n+1})+b^{2}q(Tx_{n+1},Tx_{n+2})+\cdots
			+b^{m-n}q(Tx_{m-1},Tx_{m}) \\
			& \leq (bA^{n}+b^{2}A^{n+1}\cdots +b^{m-n}A^{m-1})(q(Tx_{0},Tx_{1})) \\
			& \leq bA^{n}\left( I-bA\right) ^{-1}(q(Tx_{0},Tx_{1})).
		\end{align*}
		Let $c\gg 0.$ Choose $\delta >0$ such that $c+N_{\delta }(\theta )\subseteq
		P\,,$ where $N_{\delta }(\theta )=\{x\in E:\left \Vert x\right \Vert <\delta
		\}.$ Also, choose $N_{1}\in\mathbb{N}$ such that $bA^{n}(I-bA)^{-1}(q(Tx_{0},Tx_{1}))\in N_{\delta }(\theta )\ $
		for all $n>N_{1}$. Thus for all $m>n>N_{1},$ we have
		\begin{equation*}
			q(Tx_{n},Tx_{m})\preceq bA^{n}(I-bA)^{-1}(d(Tx_{0},Tx_{1}))\ll c
		\end{equation*}
		which implies that $\{Tx_{n}\}$ is a Cauchy sequence in $X$. Since $X$ is
		complete, there exists an element $v\in X$ such that $Tx_{n}\rightarrow v$
		as $n\rightarrow \infty $.
		
		Since $T$ is subsequently convergent, $\{x_{n}\}$has a convergent
		subsequence. So, there are $x^{\ast }\in X$ and a subsequence $\{x_{n_{i}}\}$
		such that $x_{n_{i}}\rightarrow x^{\ast }$ as $i\rightarrow \infty $. As $T$
		is continuous, we obtain that $\lim \limits_{n_{i}\rightarrow \infty}Tx_{n_{i}}=Tx^{\ast }$. The uniqueness of the limit implies that
		$Tx^{\ast}=v$. By \ref{q3:defgencdist}, we get that
		\begin{equation}\label{e2.9}
			q(Tx_{n},Tx^{\ast })\leq bA^{n}(I-bA)^{-1}(q(Tx_{0},Tx_{1})),
		\end{equation}
		where $A=A_{1}+A_{2}+A_{3}$ with $b\left( \left \Vert A_{1}\right \Vert
		+\left \Vert A_{2}\right \Vert +\left \Vert A_{3}\right \Vert \right) <1.$
		Also,
		\begin{equation}\label{e2.10}
			\begin{array}{l}
				q(Tx_{n},Tfx^{\ast }) =q(Tfx_{n-1},Tfx^{\ast }) \\
				\quad \leq A_{1}\left( q(Tx_{n-1},Tx^{\ast })\right) +A_{2}\left(
				q(Tx_{n-1},Tfx_{n-1})\right) +A_{3}\left( q(Tx^{\ast },Tfx^{\ast })\right)  \\
				\quad =A_{1}\left( q(Tx_{n-1},Tx^{\ast })\right) +A_{2}\left(
				q(Tx_{n-1},Tx_{n})\right) +A_{3}\left( q(Tx^{\ast },Tfx^{\ast })\right)  \\
				\quad \leq A^{n-1}(I-A)^{-1}(q(Tx_{0},Tx_{1}))+A^{n}(I-A)^{-1}(q(Tx_{0},Tx_{1}))
				\\
				\quad \quad \quad \quad \quad \quad \quad +\left( I-A\right) ^{-1}A^{n}\left( q(Tx_{0},Tx_{1})\right),
			\end{array}
		\end{equation}
		where $A=A_{1}+A_{2}+A_{3}$ with $b\left( \left \Vert A_{1}\right \Vert
		+\left \Vert A_{2}\right \Vert +\left \Vert A_{3}\right \Vert \right) <1.$
		By Lemma \ref{lemma:cdistconv} \ref{ilemma:cdistconv}, and \eqref{e2.9} and \eqref{e2.10}, we get that $Tx^{\ast}=Tfx^{\ast }$. As the map $T$ is one to one, we obtain that $x^{\ast}=fx^{\ast }.$
		Now by taking $x^{\ast }=fx^{\ast },$ we have
		\begin{eqnarray*}
			q(Tx^{\ast },Tx^{\ast }) &=&q(Tfx^{\ast },Tfx^{\ast }) \\
			&\leq &A_{1}\left( q(Tx^{\ast },Tx^{\ast })\right) +A_{2}\left( q(Tx^{\ast
			},Tfx^{\ast })\right) +A_{3}\left( q(Tx^{\ast },Tfx^{\ast })\right)  \\
			&=&A_{1}\left( q(Tx^{\ast },Tx^{\ast })\right) +A_{2}\left( q(Tx^{\ast
			},Tx^{\ast })\right) +A_{3}\left( q(Tx^{\ast },Tx^{\ast })\right)  \\
			&=&\left( A_{1}+A_{2}+A_{3}\right) \left( q(Tx^{\ast },Tx^{\ast })\right)  \\
			&=&A(q(Tx^{\ast },Tx^{\ast })),
		\end{eqnarray*}
		where $A=A_{1}+A_{2}+A_{3}$ with $b\left( \left \Vert A_{1}\right \Vert
		+\left \Vert A_{2}\right \Vert +\left \Vert A_{3}\right \Vert \right) <1.$ By
		Remark \ref{1.10}, we obtain that $q(Tx^{\ast },Tx^{\ast })=\theta .$
		
		To prove the uniqueness of fixed point, suppose that there exists
		another point $y^{\ast }$ in $X$ such that $y^{\ast }=fy^{\ast }$. Then,
		\begin{eqnarray*}
			q(Tx^{\ast },Ty^{\ast }) &=&q(Tfx^{\ast },Tfy^{\ast }) \\
			&\leq &A_{1}(q(Tx^{\ast },Ty^{\ast }))+A_{2}(q(Tx^{\ast },Tfx^{\ast
			}))+A_{3}(q(Ty^{\ast },Tfy^{\ast })) \\
			&=&A_{1}(q(Tx^{\ast },Ty^{\ast }))+A_{2}(q(Tx^{\ast },Tx^{\ast
			}))+A_{3}(q(Ty^{\ast },Ty^{\ast })) \\
			&=&A_{1}(q(Tx^{\ast },Ty^{\ast }))+A_{2}(\theta )+A_{3}(\theta ) \\
			&=&A_{1}(q(Tx^{\ast },Ty^{\ast })).
		\end{eqnarray*}
		As $\left \Vert A_{1}\right \Vert <1$ and $A_{1}\left( P\right) \subset
		A_{1},\,$ by the Remark \ref{1.10}, we obtain $q(Tx^{\ast },Ty^{\ast })=\theta $.
		Also, by Lemma \ref{lemma:cdistconv}\ref{ilemma:cdistconv}, we get $Tx^{\ast }=Ty^{\ast }.$
		
		Since $T$ is one to one, so $x^{\ast }=y^{\ast }$. Thus $f$ has a
		unique fixed point.
	\end{proof}
	
	The following result is the direct consequence of Theorem \ref{t2.6}
	which extends and generalizes \cite[Corollary 1]{FA13}.
	
	\begin{corollary}
		\label{c2.7}
		Let $(X,d)$ be a complete cone
		$b$-metric space, $q$ a $c$-distance on $X$ and $P$ a solid cone. If there
		exists\ linear bounded operators $A_{1},A_{2},A_{3}:E\rightarrow E$ with $%
		A_{k}\left( P\right) \subset P$ for $k=1,2,3$, $(I-A_{3})^{-1}\in \mathcal{B}%
		(E)$ and $r[b(I-A_{3})^{-1}(A_{1}+A_{2})]<1$ such that%
		\begin{equation}\label{e2.11}
			q(fx_{1},fx_{2})\leq A_{1}\left( q(x_{1},x_{2})\right) +A_{2}\left(
			q(x_{1},fx_{1})\right) +A_{3}\left( q(x_{2},fx_{2})\right)
		\end{equation}
		holds for all $x_{1},x_{2}\in X$. Then $f$ has a unique fixed point $u^{\ast
		}\in X.$ Also, for any $x_{0}\in X,$ iterative sequence $\{f^{n}x_{0}\}$
		converges to the fixed point $u^{\ast }\in X$. Moreover $q(u^{\ast },u^{\ast
		})=\theta $.
	\end{corollary}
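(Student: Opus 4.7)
The plan is to derive this corollary as an immediate specialization of Theorem~\ref{t2.6} by choosing $T$ to be the identity map on $X$. First I would set $T = I_X$, and observe that the identity is trivially one-to-one and continuous, and is sequentially convergent in the sense of Definition~\ref{1.12} (since if $\{Tu_n\} = \{u_n\}$ converges, then $\{u_n\}$ converges by definition). Thus all the hypotheses imposed on $T$ in Theorem~\ref{t2.6} are automatically satisfied.

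Next I would verify that the contractive hypothesis \eqref{e2.11} of the corollary is exactly the specialization of the Perov $THR_2$-contraction condition (1.2) to $T = I_X$: substituting $Tx_i = x_i$ and $Tfx_i = fx_i$ into
$$q(Tfx_1, Tfx_2) \leq A_1(q(Tx_1, Tx_2)) + A_2(q(Tx_1, Tfx_1)) + A_3(q(Tx_2, Tfx_2))$$
yields precisely \eqref{e2.11}. The spectral radius hypothesis $r[b(I-A_3)^{-1}(A_1+A_2)] < 1$ together with $A_k(P)\subset P$ and the invertibility of $I - A_3$ in $\mathcal{B}(E)$ are assumed identically in both statements, so the corollary's assumptions imply that $f$ is a Perov $THR_2$-contraction with respect to $T = I_X$.

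Finally, I would invoke Theorem~\ref{t2.6} to conclude that $f$ has a unique fixed point $u^\ast \in X$, that for any $x_0 \in X$ the Picard iterates $\{f^n x_0\}$ converge to $u^\ast$, and that $q(Tu^\ast, Tu^\ast) = q(u^\ast, u^\ast) = \theta$ since $T$ is the identity. No genuine obstacle is expected here: the entire argument is a direct reduction, and all the heavy lifting (the Cauchy estimate via the geometric series for $bA$, the identification of the limit through Lemma~\ref{lemma:cdistconv}\ref{ilemma:cdistconv}, and the uniqueness argument via Remark~\ref{1.10}) has already been carried out inside the proof of Theorem~\ref{t2.6}. The only mild bookkeeping is to note that with $T = I_X$ the concluding identity $q(Tu^\ast, Tu^\ast) = \theta$ of Theorem~\ref{t2.6} restates as $q(u^\ast, u^\ast) = \theta$, matching the form stated in the corollary.
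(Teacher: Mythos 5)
Your proposal is correct and is exactly the paper's own argument: the paper proves Corollary~\ref{c2.7} by the single line ``Taking $T$ as an identity map in Theorem~\ref{t2.6}, the result follows,'' and your additional checks (that the identity is one-to-one, continuous, and sequentially convergent, and that \eqref{e2.11} is the specialization of the Perov $THR_{2}$-contraction condition to $T=I_X$) are just the routine verifications the paper leaves implicit.
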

	\begin{proof}
		Taking $T$ as an identity map in Theorem \ref{t2.6},
		the result follows.
	\end{proof}
	\begin{example}
		\label{ex2.8} Let $E=\mathbb{R}^{2},$ $X=[0,1]$ and $%
		P=\{(x,y)\in E:x,y\geq 0\}.$ Let the the cone $b$-metric $d:X\times
		X\rightarrow E$ be given by $d(x,y)=(|x-y|^{p},\alpha |x-y|^{p})$ where
		$\alpha \geq 1$. Define $q:X\times X\rightarrow E$ by
		$q(x,y)=(y^{p},\alpha y^{p}).$
		Define $f:X\rightarrow X$ by%
		\begin{equation*}
			f(x)=\left \{
			\begin{array}{cc}
				\frac{x^{2}}{4} & \text{if }x\neq 1, \\
				&  \\
				\frac{1}{4} & \text{if }x=1.%
			\end{array}%
			\right.
		\end{equation*}%
		Define the bounded linear operators $A_{1},A_{2},A_{3}:E%
		\rightarrow E$ by $A_{1}=\left[
		\begin{array}{cc}
			\frac{1}{5} & \frac{1}{6} \\
			0 & \frac{3}{13}%
		\end{array}%
		\right] ,$ $A_{2}=\left[
		\begin{array}{cc}
			\frac{1}{7} & 0 \\
			0 & \frac{1}{8}%
		\end{array}%
		\right] ,$ $A_{3}=\left[
		\begin{array}{cc}
			\frac{1}{6} & 0 \\
			0 & \frac{2}{9}%
		\end{array}%
		\right] $. Set $\left \Vert \mathbf{x}\right \Vert =\max \{ \left \vert
		x_{1}\right \vert ,\left \vert x_{2}\right \vert \}$ for $\mathbf{x}=\left[
		\begin{array}{c}
			x_{1} \\
			x_{2}%
		\end{array}%
		\right] ,$ $x_{i}\in
		\mathbb{R}
		,$ $i=1,2.$ For an arbitrary $\mathbf{x}\in E,$ we have%
		\begin{eqnarray*}
			\left \Vert A_{1}\mathbf{x}\right \Vert  &=&\max \left \{ \left \vert \frac{1}{5}%
			x_{1}+\frac{1}{6}x_{2}\right \vert ,\left \vert 0x_{1}+\frac{3}{13}%
			x_{2}\right \vert \right \}  \\
			&\leq &\max \left \{ \frac{11}{30}\max \left \{ \left \vert x_{1}\right \vert
			,\left \vert x_{2}\right \vert \right \} ,\frac{3}{13}\left \vert
			x_{2}\right \vert \right \} =\frac{11}{13}\left \Vert \mathbf{x}\right \Vert .
		\end{eqnarray*}%
		Thus, $\left \Vert A_{1}\right \Vert \leq \frac{11}{13}.$ If $\mathbf{x}=\left[
		\begin{array}{c}
			1 \\
			1%
		\end{array}%
		\right] ,$ then $\left \Vert \mathbf{x}\right \Vert =1,$ and
		$$\left \Vert A_{1}\mathbf{x}\right \Vert =\max \{ \left \vert \frac{1}{5}+\frac{1}{6}\right \vert
		,\left \vert 0+\frac{3}{13}\right \vert \}=\max \{ \frac{11}{30},\frac{3}{13}\}=
		\frac{11}{30}.$$
		Hence $\left \Vert A_{1}\right \Vert =\frac{11}{30}.$
		Similarly, $\left \Vert A_{2}\right \Vert =\frac{1}{7}$ and $
		\left
		\Vert A_{3}\right \Vert =\frac{2}{9}.$ Thus,
		$$\left \Vert
		A_{1}\right
		\Vert +\left \Vert A_{2}\right \Vert +\left \Vert
		A_{3}\right
		\Vert =0.7317<1.$$
		Then, for all $x,y\in X$ and $y\neq 1,$ we have
		\begin{eqnarray*}
			q(fx,fy) &=&q(\frac{x^{2}}{4},\frac{y^{2}}{4})
			=\left[
			\begin{array}{c}
				\frac{y^{2p}}{16} \\
				\frac{\alpha y^{2p}}{16}
			\end{array}
			\right] ^{T}
			\leq \left[
			\begin{array}{c}
				\frac{1}{5}y^{2}+\frac{1}{6}\alpha y^{2}+\frac{1}{112}x^{4}+\frac{1}{96}y^{4}
				\\
				\frac{3}{13}\alpha y^{2}+\frac{1}{128}\alpha x^{4}+\frac{1}{72}\alpha y^{4}
			\end{array}
			\right] ^{T} \\
			&=&\left[
			\begin{array}{cc}
				\frac{1}{5} & \frac{1}{6} \\
				0 & \frac{3}{13}%
			\end{array}%
			\right] \left[
			\begin{array}{c}
				y^{2} \\
				\alpha y^{2}%
			\end{array}%
			\right] +\left[
			\begin{array}{cc}
				\frac{1}{7} & 0 \\
				0 & \frac{1}{8}%
			\end{array}%
			\right] \left[
			\begin{array}{c}
				\frac{x^{4}}{16} \\
				\frac{\alpha x^{4}}{16}%
			\end{array}%
			\right] +\left[
			\begin{array}{cc}
				\frac{1}{6} & 0 \\
				0 & \frac{2}{9}%
			\end{array}%
			\right] \left[
			\begin{array}{c}
				\frac{y^{4}}{16} \\
				\frac{\alpha y^{4}}{16}%
			\end{array}%
			\right]  \\
			&\leq &A_{1}(q(x,y))+A_{2}(q(x,fx))+A_{3}(q(y,fy)).
		\end{eqnarray*}%
		Also for $x,y\in X$ with $y=1,$ we obtain
		\begin{eqnarray*}
			q(fx,fy) &=&q(\frac{1}{4},\frac{1}{4})
			=\left[
			\begin{array}{c}
				\frac{1}{16} \\
				\frac{\alpha }{16}
			\end{array}
			\right]^{T}
			\leq \left[
			\begin{array}{c}
				\frac{1}{5}+\frac{1}{6}\alpha +\frac{1}{112}x^{4}+\frac{1}{96} \\
				\frac{3\alpha }{13}+\frac{\alpha }{128}x^{4}+\frac{1}{72}\alpha
			\end{array}%
			\right] ^{T} \\
			&=&\left[
			\begin{array}{cc}
				\frac{1}{5} & \frac{1}{6} \\
				0 & \frac{3}{13}%
			\end{array}%
			\right] \left[
			\begin{array}{c}
				1 \\
				\alpha
			\end{array}%
			\right] +\left[
			\begin{array}{cc}
				\frac{1}{7} & 0 \\
				0 & \frac{1}{8}%
			\end{array}%
			\right] \left[
			\begin{array}{c}
				\frac{x^{4}}{16} \\
				\frac{\alpha x^{4}}{16}%
			\end{array}%
			\right] +\left[
			\begin{array}{cc}
				\frac{1}{6} & 0 \\
				0 & \frac{2}{9}%
			\end{array}%
			\right] \left[
			\begin{array}{c}
				\frac{1}{16} \\
				\frac{\alpha }{16}%
			\end{array}%
			\right]  \\
			&\leq &A_{1}(q(x,y))+A_{2}(q(x,fx))+A_{3}(q(y,fy)).
		\end{eqnarray*}%
		Thus mapping $f$ satisfies the condition \eqref{e2.11}. Hence all the
		conditions of Corollary \ref{c2.7} are satisfied. Moreover, $u^{\ast }=0$ is a
		fixed point of $f$, and $q(u^{\ast },u^{\ast })=0.$
	\end{example}

\end{document}